\documentclass[11pt]{article}

\usepackage{amsmath}
\usepackage{amssymb}
\usepackage{amsfonts}
\usepackage{amsthm}
\usepackage{array}
\usepackage{bm}
\usepackage[shortlabels]{enumitem}
\usepackage{stmaryrd}

\usepackage{amsxtra}
\usepackage{array}
\usepackage{mathrsfs}
\usepackage{slashed}
\usepackage{xcolor}
\usepackage{tikz-cd}
\usepackage{tkz-euclide} 
\usepackage{pgfplots}

\newcolumntype{C}[1]{>{\centering\hspace{0pt}}p{#1}}
\usepackage[margin=1in]{geometry}

\newcommand{\GL}{\mathrm{GL}}

\newcommand{\SO}{\mathrm{SO}}

\newcommand{\U}{\mathrm{U}}
\newcommand{\SU}{\mathrm{SU}}

\newcommand{\G}{\mathrm{G}}

\newcommand{\Ker}{\mathrm{Ker}}

\newcommand{\Sym}{\mathrm{Sym}}

\newcommand{\Z}{\mathbb{Z}}

\newcommand{\R}{\mathbb{R}}
\newcommand{\C}{\mathbb{C}}

\newcommand{\RP}{\mathbb{RP}}

\newcommand{\vol}{\mathrm{vol}}

\newcommand{\Hc}{\mathcal{H}}

\newcommand{\uC}{\underline{\mathbb{C}}}

\newcommand{\sff}{\mathrm{I\!I}}

\newtheorem{thm}{Theorem}[section]
\newtheorem{prop}[thm]{Proposition}
\newtheorem{lem}[thm]{Lemma}
\newtheorem{cor}[thm]{Corollary}

\theoremstyle{definition}
\newtheorem{defn}[thm]{Definition}

\newtheorem{rmk}[thm]{Remark}

\newtheorem{question}[thm]{Question}

\usepackage[nottoc]{tocbibind}
\numberwithin{equation}{section}

\usepackage{hyperref}
\hypersetup{
	colorlinks,
	linkcolor={red!50!black},
	citecolor={blue!50!black},
	urlcolor={blue!80!black}
}

\title{The Morse index of constant curvature $2$-spheres}
\author{Gavin Ball and Jesse Madnick}
\date{June 2026}

\newcommand{\Addresses}
{{  \bigskip
\noindent	\textsc{University of Missouri} \par\nopagebreak
\noindent	\textsc{Columbia, MO, United States} \par\nopagebreak
\noindent	\texttt{gavin.ball@missouri.edu} \\

\medskip
\noindent	\textsc{Seton Hall University} \par\nopagebreak
\noindent	\textsc{South Orange, NJ, United States} \par\nopagebreak
\noindent	\texttt{jesse.ochs.madnick@gmail.com} \\
}}

\begin{document}

\maketitle

\begin{abstract} In the round $N$-sphere, we calculate the Morse index and nullity of all immersed minimal $2$-spheres having constant Gauss curvature.  We also obtain bounds on the stability index of the associative cone in $\R^7$ whose link is the Boruvka sphere in $S^6$.
\end{abstract}

\tableofcontents

\section{Introduction} \label{sec:Intro}

\indent \indent The study of $2$-dimensional minimal surfaces $\Sigma$ in round spheres $S^N$, $N \geq 3$, is a classical subject in differential geometry, yet many outstanding questions remain.  Several of these concern the Morse index, $\mathrm{Ind}(\Sigma)$, a non-negative integer invariant that counts the number of vector fields along $\Sigma$ that are area-decreasing to second-order.  Unfortunately, while it is the subject of various deep and influential conjectures, the Morse index is very difficult to compute, even for relatively simple minimal surfaces. \\
\indent As of this writing, the majority of results in the literature concern the codimension one case of $N = 3$.  By contrast, our understanding in the high codimension regime appears to be rather limited at present.  For any $N \geq 3$, we are aware of the works of Simons \cite{Simons68}, Ejiri \cite{Ejiri83}, Karpukhin \cite{Karpukhin21}, and Kusner--Wang \cite{KusWang24}.  In specific dimensions, there are further results of Micallef--Wolfson \cite{MicaWolf93} and Montiel--Urbano \cite{MontUrb97} for $N = 4$, of Itoh \cite{Itoh00} and Urbano \cite{Urbano03} for $N = 5$, and of the second author \cite{Madnick22} for $N = 6$, to name a few examples. \\
\indent In this paper, we will be concerned with minimal 2-spheres in $S^N$, for all $N \geq 2$, a subject with a long and rich history.  Almost a century ago, for each $n \geq 1$, Bor{\r{u}}vka \cite{Boruvka33} discovered an immersed minimal 2-sphere $\Sigma_n$ in $S^{2n}$ that is linearly full (i.e., is contained in no totally geodesic sub-sphere) and has constant Gauss curvature $K = \frac{2}{n(n+1)}$.  Bor{\r{u}}vka’s examples turn out to be fundamental.  Indeed, Calabi \cite{Calabi67} proved that every minimal $2$-sphere in $S^N$ with constant Gauss curvature is congruent to some Bor{\r{u}}vka sphere in a totally geodesic $S^{2n} \subset S^N$.  Shortly thereafter, Wallach \cite{Wallach70} established an even stronger statement: every connected piece of a minimal surface in $S^N$ with constant $K > 0$ is a subset of a Bor{\r{u}}vka sphere.  Later, Ejiri \cite{Ejiri86} showed that the cone $\mathrm{C}(\Sigma_n) \subset \R^{2n+1}$ with link $\Sigma_n$ is stable. \\
\indent Our first result, proved in $\S$\ref{sec:LinFull}, is an explicit formula for the Morse index and nullity of the Bor{\r{u}}vka spheres.

\begin{thm} \label{thm:BoruvkaFull} The Morse index and nullity of the Bor{\r{u}}vka sphere $\Sigma_n$ in $S^{2n}(1)$ are:
\begin{align*}
\mathrm{Ind}(\Sigma_n) & = n(n-1)(2n+1), & \mathrm{Nul}(\Sigma_n) & = (2n+3)(2n-2).
\end{align*}
In particular, every Jacobi field of $\Sigma_n$ arises from a twistor deformation.  (Here and in the sequel, we always mean the index and nullity of the immersion.)
\end{thm}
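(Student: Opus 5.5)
The plan is to use the $\SO(3)$-equivariance of $\Sigma_n$. Recall that $\Sigma_n$ is the orbit of a highest-weight-type vector under the irreducible real representation $\R^{2n+1}\cong \mathcal{H}_n$ (harmonic polynomials of degree $n$) of $\SO(3)$, so $\Sigma_n = \SO(3)/\SO(2)$ up to a finite quotient. Both the normal bundle $N\Sigma_n$ and the Jacobi operator
\[
\mathcal{J} = -\Delta^\perp - \widetilde{A} - 2, \qquad \widetilde{A}(V) = \textstyle\sum_{i,j}\langle \sff(e_i,e_j),V\rangle \sff(e_i,e_j),
\]
are $\SO(3)$-invariant. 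I would first use the Frenet (higher osculating space) structure of $\Sigma_n$ to write the normal bundle as an orthogonal sum $N\Sigma_n = N_2\oplus\cdots\oplus N_n$ of real rank-$2$ bundles. Here $N_k$ is the $k$-th osculating space modulo the $(k-1)$-st. Each $N_k$, viewed as a complex line bundle via the rotation $J$ on $N_k$, is the homogeneous line bundle over $\SO(3)/\SO(2)$ induced by the $\SO(2)$-character of weight $k$, since the isotropy acts on $N_k$ by rotation through angle $k\theta$. The normal connection preserves each $N_k$, because by constant curvature all the Frenet higher fundamental forms are parallel up to the standard factors. Hence $\Delta^\perp$ splits as the sum of the connection Laplacians on the $N_k$. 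Moreover $\widetilde{A}$ is nonzero only on $N_2 = $ the first normal bundle, where it acts as a constant multiple $c\,\mathrm{Id}$ fixed by $|\sff|^2 = 2-2K$ (Gauss equation).

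Next I would compute the spectrum of each block. By Frobenius reciprocity, the $L^2$ sections of the weight-$k$ homogeneous line bundle decompose as $\bigoplus_{l\geq k} V_l$, where $V_l$ has complex dimension $2l+1$. Real sections of $N_k$ correspond to complex sections of this line bundle, so each $V_l$ contributes real dimension $2(2l+1)$. On the homogeneous bundle the connection Laplacian is given by Casimir minus the isotropy weight squared, scaled by the curvature:
\[
-\Delta^\perp|_{V_l\subset\Gamma(N_k)} = K\,\bigl(l(l+1)-k^2\bigr), \qquad K = \tfrac{2}{n(n+1)}.
\]
Hence $\mathcal{J}$ acts on $V_l\subset\Gamma(N_k)$ by the scalar $K(l(l+1)-k^2) - 2 - c\,\delta_{k2}$. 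I would double-check the normalization against a known case, namely $n=1$ (the totally geodesic $S^2$, index $1$ plus nullity checks after adding the trivial normal direction) and the Veronese surface $n=2$, whose index in $S^4$ is known from the literature cited above.

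Then I would count. The index is $\sum_k\sum_{l\geq k} 2(2l+1)$ over the pairs $(k,l)$ with negative eigenvalue, i.e.\ $l(l+1)<k^2 + (2+c\delta_{k2})/K$. The nullity is the analogous sum over the pairs with equality. Since $(2 + c\delta_{k2})/K$ is explicitly $n(n+1)$ or $n(n+1)+\text{const}$, the inequality $l(l+1) < k^2 + n(n+1)$ gives an explicit range for $l$ in each $N_k$. Summing these arithmetic progressions should yield $n(n-1)(2n+1)$ and $(2n+3)(2n-2)$. Finally, for the statement about twistor deformations, I would check that the zero eigenspaces have the same total dimension as the space of infinitesimal twistor deformations. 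That space is computable as the tangent space to the orbit/moduli of holomorphic isotropic curves in the twistor space lifting $\Sigma_n$. Since every twistor deformation is Jacobi, equality of dimensions gives the claim.

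The main obstacle is the second step, specifically getting the constants right: the exact $\SO(2)$-weights on each $N_k$, the identification of the normal connection with the homogeneous connection (so that the Casimir formula applies with no extra zeroth-order term coming from the higher fundamental forms), and the exact value of the $\widetilde{A}$-term on $N_2$. I would verify these with an explicit moving-frame computation along the Frenet frame of $\Sigma_n$, using the structure equations with constant coefficients determined by $K$.
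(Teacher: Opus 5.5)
\textbf{There is a genuine gap, at the step you flagged as the main obstacle.}

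Your claim that the normal connection preserves each $N_k$ is false for $n\geq 3$. In the Frenet frame the normal connection form is tridiagonal, not block diagonal. The off-diagonal blocks coupling $N_k$ and $N_{k+1}$ are exactly the higher fundamental forms, namely the $c_{k+1}\Psi$ blocks in (\ref{eq:so3irrmc}). These are nonzero for $k+1\le n$. Indeed, if $\nabla^\perp$ preserved $N_2$, then $\R\mathbf{x}\oplus T\Sigma\oplus N_2$ would be a constant $5$-dimensional subspace, and $\Sigma_n$ would lie in an $S^4$, contradicting linear fullness. That these forms are parallel does not make them vanish.

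Said differently, the ``Casimir minus weight squared'' formula computes the Laplacian $\Delta$ of the canonical invariant connection $\nabla^{(2)}\oplus\cdots\oplus\nabla^{(n)}$. Invariant metric connections on this homogeneous bundle are not unique: their difference is an $\SO(2)$-equivariant map from $\C_1$ into $\mathfrak{so}(\C_2\oplus\cdots\oplus\C_n)$, and such maps coupling $\C_k$ with $\C_{k\pm1}$ exist. The normal connection differs from the canonical one by exactly such terms. The correct relation is
$$\Delta^\perp=\Delta+4\partial L-4\overline{\partial}U-2(UL+LU).$$
It has first-order terms mixing $\Gamma(N_k)$ with $\Gamma(N_{k\pm1})$, plus an extra positive zeroth-order diagonal term. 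So $\mathscr{J}$ does \emph{not} act by a scalar on $V_l\subset\Gamma(N_k)$.

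Your count therefore gives wrong numbers. Take $n=3$, so $K=\tfrac16$ and $c=\tfrac56$. Your scalar is $(l(l+1)-21)/6$ on both $N_2$ and $N_3$. This gives index $2(5+7+9)+2(7+9)=74$ and nullity $0$. But ambient rotations alone already give an $18$-dimensional space of Jacobi fields, $\mathfrak{so}(7)/\mathfrak{so}(3)\cong\Hc_3\oplus\Hc_5$, and the true values are $42$ and $36$. Your sanity checks at $n=1,2$ cannot catch this, because there is at most one normal block in those cases and no coupling occurs.

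The missing idea is the coupled analysis. On each isotypic component $I_l\cong(\Hc_l^\C)^{\oplus m(l)}$, with $m(l)=\min(l,n)-1$, use the raising operator $\partial$ to build the basis $A,\partial A,\ldots,\partial^{m(l)-1}A$. Then determine the signs of the eigenvalues of the resulting $m(l)\times m(l)$ tridiagonal matrix, which is the real work. The paper does this in three steps:
\begin{itemize}
\item a determinant formula showing $0$ is an eigenvalue exactly at odd $l=3,\dots,2n-1$;
\item simplicity of eigenvalues, from the sign pattern of the off-diagonal entries;
\item tracking how the number of negative eigenvalues changes as $l$ decreases past these values, anchored by all eigenvalues being negative at $l=2$.
\end{itemize}
Your final comparison with the dimension of twistor deformations is fine once the nullity has been computed correctly.
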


\indent Our proof of Theorem \ref{thm:BoruvkaFull} draws on results from the theory of tridiagonal matrices.  More specifically, we exploit the $\SO(3)$-invariance of $\Sigma_n$ to reduce the computation to an analysis of the spectrum of a finite collection of tridiagonal matrices whose entries depend polynomially on two parameters.  In $\S$\ref{sec:HighCodim}, we strengthen this result.  Indeed, using Theorem \ref{thm:BoruvkaFull} together with Calabi’s result mentioned above, a short argument yields the index and nullity of all minimal 2-spheres in $S^N$ with constant Gauss curvature.

\begin{cor} \label{cor:HighCodim} Let $\Sigma$ be an immersed minimal $2$-sphere in $S^{2n+a}(1)$ for $n \geq 1$ and $a \geq 0$.  If $\Sigma$ has constant Gauss curvature, so that $\Sigma$ is linearly full in some totally geodesic $2n$-sphere, then its Morse index and nullity are
\begin{align*}
\mathrm{Ind}(\Sigma) & = n(n-1)(2n+1) + an^2, & \mathrm{Nul}(\Sigma) & = (2n+3)(2n-2) + a(2n+1).
\end{align*}
\end{cor}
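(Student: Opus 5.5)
By Calabi's theorem we may assume, after a congruence, that $\Sigma = \Sigma_n \subset S^{2n} \subset S^{2n+a}$, with $S^{2n}$ totally geodesic. The plan is to split the normal bundle of $\Sigma$ in $S^{2n+a}$ orthogonally as
\[
N\Sigma = N_{S^{2n}}\Sigma \oplus \underline{\R}^a .
\]
The second summand is spanned by the restrictions to $\Sigma$ of the constant unit vectors $e_{2n+2},\dots,e_{2n+1+a}$ in $\R^{2n+1+a}$ orthogonal to $\R^{2n+1}$. Each $e_j$ is tangent to $S^{2n+a}$ along $S^{2n}$ and normal to $S^{2n}$. Because $S^{2n}$ is totally geodesic, the second fundamental form of $\Sigma$ takes values in the first summand. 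The normal connection therefore preserves the splitting and is trivial on the $\underline{\R}^a$ factor; the $e_j$ are parallel in the normal connection of $S^{2n}\subset S^{2n+a}$, since $\bar\nabla_X e_j = 0$ and $\langle e_j, x\rangle = 0$. The Simons operator $\mathcal{A}(V) = \sum_{i,j}\langle \sff(E_i,E_j),V\rangle \sff(E_i,E_j)$ annihilates the second summand.

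The Jacobi operator $J = \Delta^\perp - \mathcal{A} + 2$ (curvature term $\mathrm{Ric}(E_i,E_i)$ summed gives $2$ on the $2$-dimensional surface in the unit sphere, with the appropriate sign convention) therefore decouples. On the first summand it is exactly the Jacobi operator of $\Sigma_n \subset S^{2n}$, which contributes $n(n-1)(2n+1)$ to the index and $(2n+3)(2n-2)$ to the nullity by Theorem \ref{thm:BoruvkaFull}. On each of the $a$ trivial factors, writing $V = f e_j$, it becomes the scalar operator $\Delta f - 2f$ with $\Delta \geq 0$ the Laplacian of the induced metric.

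The index contribution of each trivial factor is the number of Laplace eigenvalues of $\Sigma$ below $2$, and its nullity contribution is the multiplicity of the eigenvalue $2$. The induced metric is round of curvature $K = \frac{2}{n(n+1)}$, so its Laplace eigenvalues are $K k(k+1) = \frac{2k(k+1)}{n(n+1)}$ with multiplicity $2k+1$. The eigenvalue $2$ occurs exactly at $k = n$, with multiplicity $2n+1$; consistently, the coordinate functions of $\Sigma_n \subset \R^{2n+1}$ are eigenfunctions with eigenvalue $2$ by Takahashi. The eigenvalues below $2$ are those with $k = 0,\dots,n-1$, with total multiplicity $\sum_{k=0}^{n-1}(2k+1) = n^2$. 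Summing over the $a$ factors gives the additional $an^2$ in the index and $a(2n+1)$ in the nullity.

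The main point requiring care is the decoupling: verifying that the cross terms of $J$ between the two summands vanish. This comes down to the vanishing of the second fundamental form of the totally geodesic $S^{2n}$ and the parallelism of the $e_j$. Once that is established, the computation is bookkeeping. There is no real obstacle, since Theorem \ref{thm:BoruvkaFull} carries the difficulty.
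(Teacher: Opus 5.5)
Your proposal is correct and follows the paper's argument in Section~\ref{sec:HighCodim}: you reduce to $\Sigma_n \subset S^{2n}$ via Calabi's theorem, split $N\Sigma \cong \widetilde{N}\Sigma \oplus \underline{\R}^a$ with the second factor parallel and $\sff$ valued in the first, block-diagonalize the Jacobi operator, and count Laplace eigenvalues of the round metric of curvature $\frac{2}{n(n+1)}$ below $2$ (giving $n^2$) and equal to $2$ (giving $2n+1$). One cosmetic slip: with $\Delta^\perp \geq 0$ the operator should read $\Delta^\perp - \mathcal{A} - 2$ rather than $+2$, which is what matches the scalar operator $\Delta f - 2f$ you actually use.
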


Heuristically, we expect that ``simpler" minimal surfaces ought to have low Morse indices.  Given that the Bor{\r{u}}vka sphere has the simplest possible Gauss curvature, it is natural to wonder whether it also has a relatively low index.  This raises the following:

\begin{question} \label{question} If $N$ is an immersed minimal $2$-sphere in $S^{2n}(1)$ that is linearly full, then does it follow that
$$\mathrm{Ind}(N) \geq n(n-1)(2n+1)?$$
Moreover, does equality hold if and only if $N$ is twistor-equivalent to the Bor{\r{u}}vka sphere $\Sigma_n$?
\end{question}

For $n = 2$, a theorem of Montiel--Urbano \cite{MontUrb97} provides an affirmative answer to Question \ref{question}.  To our knowledge, the best known lower bound to date is due to Karpukhin \cite{Karpukhin21}, who proves that
$$\mathrm{Ind}(N) \geq 2(n-1)\left( 2d + 2 - \left\lfloor \sqrt{8d + 1} \right\rfloor_{\mathrm{odd}}  \right)\!,$$
where $d = \frac{1}{4\pi}|N|$ is the degree of the immersion, and $\lfloor x\rfloor_{\mathrm{odd}}$ denotes the largest odd integer not exceeding $x$. \\

\indent In addition to its significance as a minimal surface, the Bor{\r{u}}vka sphere $\Sigma_3 \subset S^6$ is also important from the perspective of $\G_2$ geometry, as we now explain.  Let us equip the $6$-sphere with its standard ($\G_2$-invariant, non-integrable) almost-complex structure $J$ arising from the octonion algebra.  A surface $\Sigma \subset S^6$ is called a $J$-holomorphic curve if $J(T\Sigma) = T\Sigma$.  The Bor{\r{u}}vka sphere $\Sigma_3$ is the simplest linearly full $J$-holomorphic curve in $S^6$. \\
\indent It turns out that a surface $\Sigma \subset S^6$ is a $J$-holomorphic curve if and only if its cone $C = \mathrm{C}(\Sigma) = \{r \sigma \in \R^7 \colon r > 0, \sigma \in \Sigma\}$ is an associative $3$-fold in $\R^7$.  By ``associative $3$-fold,” we mean a $3$-dimensional submanifold in $\R^7$ that is calibrated by the $3$-form $\varphi \in \Omega^3(\R^7)$ defined by $\varphi(u,v,w) = \langle u \times v, w\rangle$, where $\times$ is the vector cross product in $\R^7$.  The upshot is that $J$-holomorphic curves in $S^6$ serve as models for conical singularities of associative $3$-folds in $\R^7$. \\
\indent The deformation theory of conically singular associative $3$-folds has been studied systematically by Bera \cite{Bera23}.  For an associative $3$-fold with a conical singularity modeled on $C$, Bera proves \cite[Theorem 1.12]{Bera23} that the Fredholm index of its deformation operator is equal to the negative of the ``stability index" $\text{s-ind}(C)$, an integer that depends only on the cone $C$.  Like the Morse index, the stability index is difficult to compute in general. \\
\indent The upper and lower stability indices, integers denoted $\text{s-ind}_+(C)$ and $\text{s-ind}_-(C)$, provide upper and lower bounds on the stability index, respectively.  These, too, are challenging to calculate explicitly, as they require detailed knowledge of the spectrum of a Dirac operator defined on the normal bundle of the link $\Sigma$ of $C$. \\
\indent In $\S$\ref{sec:StabIndex}, we obtain the spectrum of the relevant Dirac operator on the normal bundle of the Bor{\r{u}}vka sphere $\Sigma_3$, and thereby deduce the following result:

\begin{thm} \label{thm:StabilityResult} Let $C = \mathrm{C}(\Sigma_3) \subset \R^7$ be the (associative) cone over the Bor{\r{u}}vka sphere in $S^6$.  Then its upper and lower stability indices are $\mathrm{s}\text{-}\mathrm{ind}_+(C) = 41$ and $\mathrm{s}\text{-}\mathrm{ind}_-(C) = 30$.  In particular, $C$ is not rigid as an associative cone.
\end{thm}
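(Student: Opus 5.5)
The plan is to reduce everything to representation theory of $\SO(3)$, which acts transitively on $\Sigma_3 \cong S^2$ and preserves the $J$-holomorphic structure. Recall Bera's definitions. The associative deformation operator on the cone $C$ restricts on the link to a twisted Dirac-type operator $D$ acting on sections of the normal bundle $N\Sigma_3$, a rank-$4$ bundle. Homogeneous solutions of rate $\lambda$ on $C$ correspond to eigenvalues of $D$, after an affine shift. $\text{s-ind}_\pm(C)$ is then a weighted count of eigenvalues in a window determined by the critical rates in $[-1,1]$ (or $(-1,1]$ in the lower case). Concretely, it is a sum of dimensions of eigenspaces whose rates lie in the relevant interval, with conventions subtracting the dimension of translations/rotations/$\G_2$-deformations.

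So the key task is to compute $\mathrm{Spec}(D)$ with multiplicities, at least for eigenvalues in a bounded window.

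\textbf{Step 1.} Since $\Sigma_3$ is the $\SO(3)$-orbit where $\R^7 = V_6$ (the $7$-dimensional irreducible representation, i.e.\ harmonic cubics), $N\Sigma_3$ is a homogeneous bundle $\SO(3)\times_{\SO(2)} W$. As $\SO(2)$-representations, the normal bundle splits along the Frenet filtration into the second and third osculating line bundles, with $\SO(2)$-weights $\pm 4$ and $\pm 6$ in the standard normalization where the tangent bundle has weight $\pm 2$ (spinor conventions aside).

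By Frobenius reciprocity, $L^2(N\Sigma_3) = \bigoplus_\ell V_{2\ell}\otimes \mathrm{Hom}_{\SO(2)}(V_{2\ell}, W)$. The isotypic component $V_{2\ell}$ appears with multiplicity equal to the number of weights of $W$ that are at most $2\ell$ in absolute value. This multiplicity is $0$, $2$, or $4$ depending on $\ell$.

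\textbf{Step 2.} $D$ is $\SO(3)$-equivariant, so on each isotypic piece it acts as a matrix of size at most $4\times 4$ on the multiplicity space. I will compute $D$ in a moving frame via the structure equations of $\Sigma_3$, in the style of the tridiagonal reduction used for Theorem~\ref{thm:BoruvkaFull}. The operator will be off-diagonal, coupling weight $k$ to weight $k\pm 2$ through $\partial$ and $\bar\partial$, plus zeroth-order terms coming from the second fundamental form and $\varphi$. Its restriction is therefore a small tridiagonal or anti-diagonal matrix in the weight-$4$ and weight-$6$ components. Its entries are explicit functions of $\ell$, namely of the form $\sqrt{(\ell\mp m)(\ell\pm m+1)}$. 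Diagonalizing gives closed-form eigenvalues, such as roots of a quadratic in $\ell$, each with multiplicity $2\ell+1$.

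\textbf{Step 3.} I will translate these eigenvalues into rates $\lambda$ and list those in $[-1,1]$. Summing $(2\ell+1)$ over the admissible pairs of $\ell$ and eigenvalue branch, and applying Bera's formula, should give $41$ and $30$. As a consistency check, the eigenvalues at rates $0$ and $1$ should contain the $7$-dimensional translations, and the $\G_2$-rotations modulo the stabilizer, of dimension $14-3=11$. The difference $41-30=11$ strongly suggests exactly this: the jump between the upper and lower indices is the rate-$1$ kernel coming from $\mathfrak{g}_2/\mathfrak{so}(3)$. Non-rigidity follows because $\text{s-ind}_-$ exceeds the count forced by the symmetries, or equivalently because there are homogeneous rate-$1$ deformations beyond the $\G_2$-orbit of $C$.

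The main obstacle is Step 2: getting the exact zeroth-order terms and the normalizations of $D$ right. This includes the contributions of the cross product and the second fundamental form, the correct weights of the normal line bundles, and Bera's shift between Dirac eigenvalues and rates. I would first fix these conventions by checking against known data, such as the totally geodesic $S^2$ (where the cone is $\R^3$) and the $\G_2$-Jacobi fields. Only after that would I compute the general $\ell$ block.
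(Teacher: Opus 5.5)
Your overall route is the paper's: use $\SO(3)$-equivariance and Frobenius reciprocity to split $L^2(N\Sigma_3)=\Hc_2^\C\oplus\bigoplus_{\ell\ge 3}(\Hc_\ell^\C)^{\oplus 2}$, then reduce the Dirac operator to small matrices on each isotypical piece. But the proposal stops where the proof actually begins. The values $41$ and $30$ are only said to ``should'' come out, and computing $D_{\Sigma_3}$ is deferred as ``the main obstacle.'' The missing content is as follows.

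\begin{itemize}
\item Pull the $\G_2$ Maurer--Cartan form back to $\SO(3)$. This shows that the normal part of the characteristic connection $\widehat\nabla^\perp$ is exactly the direct-sum connection on $\uC_2\oplus\uC_3$. The $c_3=\tfrac12$ coupling between $\uC_2$ and $\uC_3$ in the Levi-Civita normal connection is removed by the $\tfrac12(\nabla J)J$ correction. It also shows that $J=\mathrm{diag}(i,-i)$ on $\uC_2\oplus\uC_3$.
\item Consequently, contrary to your expectation, there are no zeroth-order terms: $D_{\Sigma_3}=\begin{bmatrix}0&-2i\overline{\partial}\\ -2i\partial&0\end{bmatrix}$.
\item Take $A$ in the $\Hc_\ell^\C$-component of $\Gamma(\uC_2)$. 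The identity $\overline{\partial}\partial A=\tfrac14(\Delta+2K)A$, with $K=\tfrac16$ and $\Delta A=(4-\ell(\ell+1))KA$, reduces $D_{\Sigma_3}$ on $\mathrm{span}(A,\partial A)$ to a $2\times 2$ matrix.
\item Hence $JD_{\Sigma_3}$ has eigenvalue $0$ on $I_2$, with multiplicity $10$. On $I_\ell$ with $\ell\ge 3$ it has eigenvalues $\pm\sqrt{(\ell+3)(\ell-2)/6}$, each with multiplicity $2(2\ell+1)$, not $2\ell+1$.
\item Rate $\lambda$ corresponds to the eigenvalue $\lambda+1$ of $JD_{\Sigma_3}$, not of $D_{\Sigma_3}$. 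The relevant eigenvalues $0,1,\sqrt{7/3},2$ come from $\ell=2,3,4,5$, giving $d_{-1}=10$, $d_0=14$, $d_{\sqrt{7/3}-1}=18$, $d_1=22$.
\end{itemize}

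The bookkeeping you do state is also wrong in ways that would derail the final step. The lower index is $\tfrac12 d_{-1}+\sum_{\lambda\in(-1,1)}d_\lambda-7$. It omits the rate-$1$ term $d_1$ and the $\dim(\G_2/\mathrm{H})$ subtraction, so your window $(-1,1]$ for the lower case is reversed. More importantly, your consistency check misreads the gap. By definition,
\[
\text{s-ind}_+(C)-\text{s-ind}_-(C)=d_1-\dim(\G_2/\mathrm{H}),
\]
which vanishes precisely when $C$ is rigid. Your reading treats the jump of $11$ as the rate-$1$ kernel and identifies it with $\mathfrak{g}_2/\mathfrak{so}(3)$. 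But if the rate-$1$ kernel consisted only of $\G_2$-deformations, then $d_1=\dim(\G_2/\SO(3))=11$ and the jump would be $0$, so $C$ would be rigid. That contradicts the statement you are proving. What actually happens is $d_1=22=\dim_\R\Hc_5^\C$, of which only the real copy $\mathfrak{g}_2/\mathfrak{so}(3)\cong\Hc_5$ comes from $\G_2$. The other $11$ dimensions are non-$\G_2$ deformations, and these are the non-rigidity. With the correct formulas, $\text{s-ind}_+(C)=5+14+18+22-7-11=41$ and $\text{s-ind}_-(C)=5+14+18-7=30$.
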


\noindent \textbf{Acknowledgements:} We thank Gorapada Bera and Mikhail Karpukhin for helpful conversations.

\section{The linearly full case in $S^{2n}$} \label{sec:LinFull}

\subsection{The Bor{\r{u}}vka sphere}

\indent \indent Classically, the Bor{\r{u}}vka sphere $\Sigma_n \subset S^{2n}$ is defined as follows.  Let $\{p_1, \ldots, p_{2n+1}\}$ be an orthogonal basis of the set of harmonic, homogeneous $n$th degree polynomials in $3$ variables, normalized so that $(p_1)^2 + \cdots + (p_{2n+1})^2 = 1$.  Then
$$\Sigma_n = \left\{ \left( p_1(x,y,z), \ldots, p_{2n+1}(x,y,z) \right) \in \R^{2n+1} \colon x^2 + y^2 + z^2 = 1 \right\}.$$
For example, $\Sigma_2 \subset S^4$ is the Veronese surface
$$\Sigma_2 = \textstyle \left\{ \frac{1}{\sqrt{3}}\!\left( xy, xz, yz, \frac{1}{2}(x^2 - y^2), \frac{1}{2\sqrt{3}}(x^2 + y^2 - 2z^2)\right) \in \R^5 \colon x^2 +y^2 + z^2 = 1 \right\}\!.$$
Topologically, each $\Sigma_{2k+1}$ is diffeomorphic to $S^2$, whereas each $\Sigma_{2k}$ is diffeomorphic to $\RP^2$.  It is well-known that $\Sigma_n$ is $\SO(3)$-invariant and has constant curvature $K = \frac{2}{n(n+1)}$.   \\
\indent In this work, we adopt a different point of view, regarding the Bor{\r{u}}vka sphere as the unique $2$-dimensional $\SO(3)$-orbit in $S^{2n} \subset \R^{2n+1}$ up to congruence, where $\SO(3)$ acts irreducibly on $\R^{2n+1}$.  To be more concrete, let us recall that every irreducible real $\SO(3)$-representation is isomorphic to
$$\mathcal{H}_k = \{\text{harmonic homogeneous }k\text{th degree polynomials in 3 variables}\} \cong \R^{2k+1}, \ \ k \geq 0.$$
Thus, viewing the ambient euclidean space $\R^{2n+1} = \mathcal{H}_n$ as a space of polynomials, and letting $\pi \colon \Sym^n(\R^3) \to \mathcal{H}_n$ denote a suitable scaling of the projection of a polynomial onto its harmonic part, the Bor{\r{u}}vka sphere may be regarded as $\Sigma_n = \SO(3) \cdot (\pi(x^n))$. \\
\indent We now explore some aspects of the geometry of $\Sigma_n$ by means of the moving frame.  For this, we let $\mathbf{g} \colon \SO(3) \to \mathrm{M}_{2n+1}(\R)$ denote the embedding given by the $\SO(3)$-action on $\Hc_n = \R^{2n+1}$, and write $\mathbf{g} = \begin{bmatrix} \mathbf{x} & \mathbf{e}_1 & \cdots & \mathbf{e}_{2n} \end{bmatrix}$.  Let $\mu \in \Omega^1(\SO(3); \mathfrak{so}(3))$ denote the Maurer-Cartan form of $\SO(3)$, so that $\mu = \mathbf{g}^{-1}d\mathbf{g}$, and hence
\begin{align} \label{eq:StrEq0}
d\!\begin{bmatrix} \mathbf{x} & \mathbf{e}_1 & \cdots & \mathbf{e}_{2n} \end{bmatrix} =  \begin{bmatrix} \mathbf{x} & \mathbf{e}_1 & \cdots & \mathbf{e}_{2n} \end{bmatrix} \mu.
\end{align}
Geometrically, we regard elements $\begin{bmatrix} \mathbf{x} & \mathbf{e}_1 & \cdots & \mathbf{e}_{2n} \end{bmatrix} \in \SO(3)$ as adapted $\SO(2)$-frames along $\Sigma_n$. \\
\indent To be more explicit, we shall use the embedding $\mathfrak{so}(3) \hookrightarrow \mathfrak{so}(2n+1)$ to write $\mu$ as a matrix-valued $1$-form.  Indeed, following \cite{Bryant85}, there exist $1$-forms $\omega_1, \omega_2, \rho \in \Omega^1(\SO(3))$ such that
\begin{equation} \label{eq:so3irrmc}
   \mu = \left[ \begin{array}{ c  c | c  c  c  c }
        0 & -\Omega^t & 0 & 0 & \cdots & 0 \\
        \Omega & \Lambda & - c_2 \Psi^t & 0 & \cdots & 0 \\ \hline
        0 & c_2 \Psi & 2 \Lambda & - c_3 \Psi^t & \ddots & 0 \\
        0 & 0 & c_3 \Psi & 3 \Lambda & \ddots & \\
        \vdots & \vdots & 0 & \ddots & \ddots & \\
         & & \vdots & \ddots &(n-1) \Lambda & -c_k \Psi^t \\
         0 & 0 & 0 & & c_k \Psi & n \Lambda
    \end{array}\right]\!,
\end{equation}
where
\begin{equation*}
    \begin{aligned}
        \Omega = \begin{bmatrix}
            \omega_1 \\
            \omega_2
        \end{bmatrix}, \:\:\:\: \Psi = \begin{bmatrix}
            \omega_1 & - \omega_2 \\
            \omega_2 & \omega_1
        \end{bmatrix}, \:\:\:\: \Lambda = \begin{bmatrix}
            0 & \rho \\
            -\rho & 0
        \end{bmatrix},
    \end{aligned}
\end{equation*}
and the constants $c_r$  are
\begin{equation*}
    c_r = \sqrt{\frac{1}{2} \left( 1 - \frac{r(r-1)}{n(n+1)}  \right)}, \ \ \ 2 \leq r \leq n.
\end{equation*}
\indent Now, equations (\ref{eq:StrEq0}) and (\ref{eq:so3irrmc}) imply that $d\mathbf{x} = \mathbf{e}_1\omega_1 + \mathbf{e}_2\omega_2$, from which we may interpret $\mathbf{e}_1, \mathbf{e}_2$ as tangent vectors to $\Sigma_n$, and $\mathbf{e}_3, \ldots, \mathbf{e}_{2n}$ as normal vectors.  Consequently, we see that $\Lambda$ (or equivalently, $\rho$) is the connection form for the tangential connection on $T\Sigma_n$, while the lower right block of (\ref{eq:so3irrmc}) is the connection form for the normal connection on $N\Sigma_n$.  Moreover, from the lower left block of (\ref{eq:so3irrmc}), we see that the second fundamental form coefficients are given by
\begin{equation} \label{eq:SFF}
    \sff^3 = c_2 \begin{bmatrix}
        1 & 0 \\
        0 & - 1
    \end{bmatrix}, \:\:\: \sff^4 = c_2 \begin{bmatrix}
        0 & 1 \\
        1 & 0
    \end{bmatrix}, \:\:\: \sff^a = 0 \:\:\: \text{for} \:\: a \geq 5.
\end{equation}
Finally, note that the Maurer-Cartan equation $d\mu = -\mu \wedge \mu$ yields the standard structure equations
\begin{align} \label{eq:StrEq1}
d\omega_1 & = -\rho \wedge \omega_2  \notag \\
d\omega_2 & = \rho \wedge \omega_1 \\
d\rho & = K\,\omega_1 \wedge \omega_2, \notag
\end{align}
with $K = \frac{2}{n(n+1)}$.  In particular, this verifies that $\Sigma_n$ has constant curvature $K$. \\

\indent In the sequel, we shall make frequent use of certain associated complex line bundles.  That is, for $k \in \Z$, we let $\C_k$ denote the complex $\SO(2)$-module of highest weight $k$, and let $\rho_k \colon \SO(2) \to \SO(\C_k)$ denote the corresponding representation (i.e., $\rho_k(e^{i\theta}) = e^{ki\theta}$).  We then let
$$\underline{\C}_k := \SO(3) \times_{\rho_k} \C_k \to S^2$$
denote the associated complex line bundle.    From (\ref{eq:so3irrmc}), we observe that the tangent and normal bundles of $\Sigma_n$ are isomorphic as real $\SO(2)$-vector bundles to
\begin{align}
   T \Sigma_n & = \uC_1, &    N \Sigma_n & = \uC_2 \oplus \uC_3 \oplus \cdots \oplus \uC_n. \label{eq:normbundiso}
\end{align}
In particular, for $k \geq 1$, we see that $\uC_k \cong \Sym^k(T^{1,0}S^2)$ as real vector bundles, so that the normal bundle is  $N\Sigma_n \cong \bigoplus_{r = 2}^n \Sym^r(T^{1,0}S^2)$, which is intrinsic.  Indeed, we shall often write sections $V \in \Gamma(N\Sigma_n)$ as tuples of symmetric tensor fields, say $V = (A_2, A_3, \ldots, A_n)$ where each $A_r \in \Gamma(\uC_r)$.

\subsection{The Jacobi operator}

\indent \indent In general, for a $2$-dimensional minimal surface $\Sigma$ in a Riemannian manifold $M$ with second fundamental form $\mathrm{I\!I}$, we define operators $\mathscr{B}, \mathscr{R} \colon\Gamma(N\Sigma) \to \Gamma(N\Sigma)$ by
\begin{align*}
\mathscr{B}(\eta) & = \left\langle \mathrm{I\!I}(e_i, e_j), \eta \right\rangle \mathrm{I\!I}(e_i, e_j), & \mathscr{R}(\eta) = \left( \overline{R}(\eta, e_i)e_i \right)^\perp,
\end{align*}
where $\overline{R}$ is the curvature of $M$, and $(e_1, e_2)$ is a local orthonormal frame for $T\Sigma$.  The Jacobi operator  $\mathscr{J} \colon \Gamma(N\Sigma) \to \Gamma(N\Sigma)$ of the minimal surface is given by
\begin{equation*}
    \mathscr{J} = - \Delta^\perp - \mathscr{B} - \mathscr{R},
\end{equation*}
where $\Delta^\perp$ is the connection Laplacian associated to the normal connection on $N\Sigma$. \\
\indent In our situation, $M = S^{2n}$ is the round sphere of constant curvature $1$, and $\Sigma = \Sigma_n$ is the Bor{\r{u}}vka sphere.  In this case, it is well-known that $\mathscr{R} = 2 \, \mathrm{Id}_{N\Sigma_n}$.  Moreover, by (\ref{eq:SFF}), the operator $\mathscr{B}$ is given by
\begin{equation*}
        \mathscr{B} = 2 c_2^2 \, \mathrm{Id}_{\uC_2} \oplus 0_{\uC_3}  \oplus \cdots \oplus 0_{\uC_n}.
\end{equation*}
It remains to calculate $\Delta^\perp$.  The difficulty is that $\Delta^\perp$ (and hence also $\mathscr{J}$) does not respect the splitting $N\Sigma \cong \Sym^2(T^{1,0}S^2) \oplus \cdots \oplus \Sym^n(T^{1,0}S^2)$. \\
\indent As such, we shall introduce a second Laplace operator.  That is, using the isomorphism $N\Sigma \cong \Sym^2(T^{1,0}S^2) \oplus \cdots \oplus \Sym^n(T^{1,0}S^2)$ to view normal vectors as tuples of symmetric tensors, we may equip $N\Sigma$ with the direct sum connection $\nabla^{(2)} \oplus \cdots \oplus \nabla^{(n)}$, in which each $\nabla^{(r)}$ on $\Sym^r(T^{1,0}S^2)$ is induced from the Levi-Civita connection $\rho$ on $S^2$.  This direct sum connection on $N\Sigma$ induces a connection Laplacian $\Delta$ that is well-suited to the geometry of the Bor{\r{u}}vka sphere.  We seek a formula relating $\Delta^\perp$ to $\Delta$. \\

\indent Make a complex change of variables $\eta = \omega_1 + i \omega_2$, so that the structure equations (\ref{eq:StrEq1}) become
\begin{equation*}
    d \eta = i \rho \wedge \eta, \:\:\:\: d \rho = \tfrac{i}{2} K \eta \wedge \overline{\eta}.
\end{equation*}
Note that each section $\sigma \in \Gamma(\underline{\C}_r) = \Gamma( \SO(3) \times_{\rho_r} \C_r)$ can be written as $\sigma = A \eta^r$ for a unique $\SO(2)$-equivariant function $A \colon \SO(3) \to \C_r$.  In the sequel, we identify sections $\sigma$ with their corresponding equivariant functions $A$.  With this understood, we define operators
\begin{equation*}
\begin{aligned}
    \partial &: \Gamma( \uC_r ) \to \Gamma(\uC_{r+1}), \\
    \overline{\partial} &: \Gamma( \uC_r ) \to \Gamma(\uC_{r-1}),
\end{aligned}
\end{equation*}
by the equation
\begin{equation}\label{eq:dA}
    d A + i r A \rho = \left( \partial A \right) \eta + \left( \overline{\partial} A \right) \overline{\eta}.
\end{equation}

\begin{rmk} We may define a complex structure on $S^2$ by declaring a $\C$-valued $1$-form on $S^2$ to be of type $(1,0)$ if its pullback to $\SO(3)$ lies in the complex span of $\eta$.  The $(0,1)$-part of the Levi-Civita connection $\nabla = \nabla^{(r)}$ then endows the line bundle $\underline{\C}_r \to S^2$ with a holomorphic structure.  In fact, $\nabla^{0,1} \colon \Gamma(\underline{ \mathbb{C}}_r  ) \to  \Gamma( \underline{ \mathbb{C}}_r \otimes \Lambda^{0,1}S^2 ) = \Gamma( \underline{ \mathbb{C}}_r \otimes \underline{\C}_{-1} ) \cong \Gamma( \underline{\mathbb{C}}_{r-1})$ corresponds to $\overline{\partial}$.
\end{rmk}

\begin{prop}\label{prop:weitzenbock}
    On $\Gamma(\uC_r),$ there is a Weitzenbock formula
    \begin{equation*}
        \left( \partial \overline{\partial} - \overline{\partial} \partial \right) A = -\tfrac{1}{2} r K A.
    \end{equation*}
\end{prop}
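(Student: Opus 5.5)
The plan is to differentiate the defining equation (\ref{eq:dA}) once more and use $d^2 A = 0$. Everything happens on $\SO(3)$, where $A$ is an $\SO(2)$-equivariant $\C$-valued function and $\eta, \overline{\eta}, \rho$ form a coframing. The only inputs are (\ref{eq:dA}) itself, applied on $\uC_r$, $\uC_{r+1}$ and $\uC_{r-1}$, and the complex structure equations
$$d\eta = i\rho\wedge\eta, \qquad d\overline{\eta} = -i\rho\wedge\overline{\eta}, \qquad d\rho = \tfrac{i}{2}K\,\eta\wedge\overline{\eta}.$$

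First I apply $d$ to the left-hand side $dA + irA\rho$. Since $d(dA) = 0$, this gives $ir\,dA\wedge\rho + irA\,d\rho$. In the first term I substitute $dA = \partial A\,\eta + \overline{\partial}A\,\overline{\eta} - irA\rho$; the $\rho\wedge\rho$ piece vanishes. In the second term I substitute $d\rho$. The result is
$$ir\,\partial A\,\eta\wedge\rho + ir\,\overline{\partial}A\,\overline{\eta}\wedge\rho - \tfrac{1}{2}rK A\,\eta\wedge\overline{\eta}.$$

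Next I apply $d$ to the right-hand side $\partial A\,\eta + \overline{\partial}A\,\overline{\eta}$. Because $\partial A$ is a section of $\uC_{r+1}$, (\ref{eq:dA}) gives $d(\partial A) = -i(r+1)\partial A\,\rho + \partial\partial A\,\eta + \overline{\partial}\partial A\,\overline{\eta}$. Because $\overline{\partial}A$ is a section of $\uC_{r-1}$, it gives $d(\overline{\partial}A) = -i(r-1)\overline{\partial}A\,\rho + \partial\overline{\partial}A\,\eta + \overline{\partial}\,\overline{\partial}A\,\overline{\eta}$. Wedging with $\eta$ and $\overline{\eta}$ and adding $\partial A\,d\eta + \overline{\partial}A\,d\overline{\eta}$, the terms $\eta\wedge\eta$ and $\overline{\eta}\wedge\overline{\eta}$ drop out. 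The weight shifts $r\pm1$ combine with the $\pm i$ from $d\eta$ and $d\overline{\eta}$ to leave exactly $ir\,\partial A\,\eta\wedge\rho + ir\,\overline{\partial}A\,\overline{\eta}\wedge\rho$. What remains is $(\partial\overline{\partial}A - \overline{\partial}\partial A)\,\eta\wedge\overline{\eta}$.

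Finally I compare the two sides. The $\eta\wedge\rho$ and $\overline{\eta}\wedge\rho$ terms cancel identically, which is a consistency check on the equivariance weights. Since $\eta\wedge\overline{\eta}\neq 0$, the $\eta\wedge\overline{\eta}$ coefficients must agree, giving $(\partial\overline{\partial} - \overline{\partial}\partial)A = -\tfrac{1}{2}rKA$.

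There is no real obstacle here. The only care needed is bookkeeping: using the correct weights $r+1$ and $r-1$ when applying (\ref{eq:dA}) to $\partial A$ and $\overline{\partial}A$, and keeping the signs of the wedge products straight, in particular $\overline{\eta}\wedge\eta = -\eta\wedge\overline{\eta}$.
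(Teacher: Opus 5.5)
Your proposal is correct and follows the same route as the paper: differentiate (\ref{eq:dA}), use $d^2A=0$ together with the structure equations and the definition of $\partial,\overline{\partial}$ on $\uC_{r\pm 1}$, and then compare the $\eta\wedge\overline{\eta}$ coefficients. Your explicit check that the $\eta\wedge\rho$ and $\overline{\eta}\wedge\rho$ terms cancel is cleaner than the paper's displayed computation, which writes $-ir\,dA\wedge\rho$ where it should be $+ir\,dA\wedge\rho$; this sign slip does not affect the $\eta\wedge\overline{\eta}$ coefficient.
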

\begin{proof}
    Differentiating (\ref{eq:dA}) and using $d^2 = 0$ and the structure equations we have
    \begin{equation*}
        \begin{aligned}
            - i r \left[ d(A) \right ] \wedge \rho + i r A \left( \tfrac{i}{2} K \eta \wedge \overline{\eta} \right) &= \left[ d \left( \partial A \right) \right] \wedge \eta + \left( \partial A \right) \left( i \rho \wedge \eta \right) + \left[ d \left( \overline{\partial} A \right) \right] \wedge \overline{\eta} + \left( \overline{\partial} A \right) \left( -i \rho \wedge \overline{\eta} \right) \\
           \implies &  \tfrac{1}{2} r K A \eta \wedge \overline{\eta} + \overline{\partial} \partial A \overline{\eta} \wedge \eta + \partial \overline{\partial} A \eta \wedge \overline{\eta} = 0 \\
           \implies & \left( \partial \overline{\partial} A - \overline{\partial} \partial A + \tfrac{1}{2} r K A \right) \eta \wedge \overline{\eta} = 0.
        \end{aligned}
    \end{equation*}
\end{proof}

\begin{prop}\label{prop:CrLaplacian}
    Let $\Delta$ denote the connection Laplacian associated to the connection on $\uC_r \cong \Sym^r(T^{1,0}S^2)$ induced by the Levi-Civita connection $\rho$. Then
\begin{enumerate}[(a)]
\item We have $\Delta = 2 \left( \partial \overline{\partial} + \overline{\partial} \partial \right)$.
\item For $A \in \Gamma(\uC_r)$, we have the following commutation relations
    \begin{equation*}
        \begin{aligned}
            \left[ \Delta, \partial \right] A &= \left( 2r + 1 \right) K \partial A, \\
            \left[ \Delta, \overline{\partial} \right] A &= -\left( 2 r - 1 \right) K \overline{\partial} A,
        \end{aligned}
    \end{equation*}
\end{enumerate}    

    \begin{proof} Part (a) follows from a direct calculation.  Part (b) follows from combining Proposition \ref{prop:weitzenbock} and part (a).
    \end{proof}
\end{prop}

\begin{prop}\label{prop:normlap}
    Let $\Delta^\perp$ denote the connection Laplacian associated to the normal connection $\nabla^\perp$ on $N \Sigma_n$, and let $\Delta$ be the connection Laplacian associated to the direct sum connection $\nabla^{(2)} \oplus \cdots \oplus \nabla^{(n)}$ on $N\Sigma_n = \uC_2 \oplus \cdots \oplus \uC_n$.  Then
    \begin{equation*}
        \Delta^{\perp} \begin{bmatrix}
            A_2 \\
            A_3 \\
            \vdots \\
            A_{n-1} \\
            A_{n}
        \end{bmatrix} = \Delta \begin{bmatrix}
            A_2 \\
            A_3 \\
            \vdots \\
            A_{n-1} \\
            A_{n}
        \end{bmatrix} + 4 \, \partial \begin{bmatrix}
            0 \\
            c_3 A_2 \\
            \vdots \\
            c_{n-1} A_{n-2} \\
            c_{n} A_{n-1}
        \end{bmatrix} - 4 \, \overline{\partial} \begin{bmatrix}
            c_3 A_3 \\
            c_4 A_4 \\
            \vdots \\
            c_{n-1} A_{n-1} \\
            0
        \end{bmatrix} - 2 \begin{bmatrix}
            c_3^2 A_2 \\
            (c_3^2 + c_4^2) A_3 \\
            \vdots \\
            (c_{n-1}^2+c_n^2) A_{n-1} \\
            c_n^2 A_n
        \end{bmatrix}.
    \end{equation*}
\end{prop}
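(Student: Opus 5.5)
The approach is to write the normal connection $\nabla^\perp$ as the direct sum connection $\nabla := \nabla^{(2)}\oplus\cdots\oplus\nabla^{(n)}$ plus an endomorphism-valued $1$-form $\Theta$. Then I expand the connection Laplacian
$$\Delta^\perp = -\mathrm{tr}\,(\nabla^\perp)^2 = \Delta - (\text{first-order cross terms}) - \textstyle\sum_i \Theta(e_i)^2$$
and identify each piece using the operators $\partial,\overline{\partial}$. The sign conventions must match Proposition \ref{prop:CrLaplacian}(a), where $\Delta = 2(\partial\overline{\partial}+\overline{\partial}\partial)$.

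\textbf{Step 1: identify $\Theta$.} The lower-right block of (\ref{eq:so3irrmc}) is the normal connection form. Its diagonal blocks $r\Lambda$ give precisely the connections $\nabla^{(r)}$ on $\uC_r$, since in complex notation they produce the term $irA\rho$ in (\ref{eq:dA}). The off-diagonal blocks $\pm c_{r+1}\Psi$ make up $\Theta$. In complex notation $\Psi$ acts as multiplication by $\eta$ (or $\overline{\eta}$, depending on the conjugation convention). Concretely, write a section as $V=\sum_r \mathrm{Re}(A_r\eta^r)$, or in the frame $V=\sum_r A_r$ with $A_r=a_r+ib_r$ attached to $(\mathbf{e}_{2r-1},\mathbf{e}_{2r})$. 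Then $\nabla^\perp$ acts on components as
$$(\nabla^\perp A)_r = \nabla A_r + c_r\,\eta\, A_{r-1}\cdot(\ast) - c_{r+1}\,\overline{\eta}\,A_{r+1}\cdot(\ast),$$
where $(\ast)$ records the correct conjugation/weight matching. The key point is that $\eta A_{r-1}$ has weight $r$ and $\overline{\eta}A_{r+1}$ has weight $r$. I would check this carefully on a $2\times 2$ block, using $\Psi = \mathrm{Re/Im}$ of multiplication by $\omega_1+i\omega_2$.

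\textbf{Step 2: expand the Laplacian.} Write $\nabla^\perp = \nabla + \Theta$, with $\Theta = \eta\,\Theta^{1,0} + \overline{\eta}\,\Theta^{0,1}$, where $\Theta^{1,0}$ raises the index ($A_{r-1}\mapsto c_rA_{r-1}$ in slot $r$) and $\Theta^{0,1}$ lowers it ($A_{r+1}\mapsto -c_{r+1}A_{r+1}$ in slot $r$). Both $\Theta^{1,0}$ and $\Theta^{0,1}$ are parallel, since the $c_r$ are constants and the weights match. Working in the complex coframe, the same computation that gives Proposition \ref{prop:CrLaplacian}(a) yields $\Delta^\perp = 2(\partial^\perp\overline{\partial}^\perp+\overline{\partial}^\perp\partial^\perp)$, where $\partial^\perp = \partial + \Theta^{1,0}$ and $\overline{\partial}^\perp = \overline{\partial}+\Theta^{0,1}$, up to the signs fixed by the convention. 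Expanding gives three kinds of terms:
\begin{itemize}
\item the terms $2(\partial\overline{\partial}+\overline{\partial}\partial) = \Delta$;
\item mixed terms $2(\partial\Theta^{0,1}+\Theta^{0,1}\partial+\overline{\partial}\Theta^{1,0}+\Theta^{1,0}\overline{\partial})$;
\item zeroth-order terms $2(\Theta^{1,0}\Theta^{0,1}+\Theta^{0,1}\Theta^{1,0})$.
\end{itemize}

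For the zeroth-order terms, $\Theta^{1,0}\Theta^{0,1}$ applied to slot $r$ gives $-c_{r+1}^2A_r$ (present only when $r<n$), and $\Theta^{0,1}\Theta^{1,0}$ gives $-c_r^2A_r$ (present only when $r\ge 3$, since there is no $\uC_1$ component). This reproduces the last vector, $-2(c_r^2+c_{r+1}^2)A_r$ with the stated truncations at the ends. For the mixed terms, the two orderings $\partial\Theta$ and $\Theta\partial$ agree because $\Theta$ is parallel, so each pair doubles. This gives $4\partial(c_rA_{r-1})$ and $-4\overline{\partial}(c_{r+1}A_{r+1})$. Note that the slot-$r$ component of $\partial(\cdot)$ comes from weight $r-1$.

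\textbf{Main obstacle.} The delicate step is the bookkeeping of signs and conjugations. I need to confirm that the real block $c\Psi$ really corresponds to the raising and lowering maps with the indicated signs, rather than to their conjugates. I also need to confirm that the $\eta\wedge\overline{\eta}$ ordering in the Laplacian formula yields $+4\partial$ and $-4\overline{\partial}$, not the opposite. To settle this I would test on the simplest case $n=3$ with a constant-coefficient local computation, and cross-check the result against self-adjointness of $\Delta^\perp$, using the fact that $\partial$ and $-\overline{\partial}$ are formal adjoints up to a factor.
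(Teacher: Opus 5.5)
Your strategy is the paper's: split $\partial^\perp,\overline{\partial}^\perp$ into $\partial,\overline{\partial}$ plus constant slot-shift matrices read off from (\ref{eq:so3irrmc}), expand $\Delta^\perp=2(\partial^\perp\overline{\partial}^\perp+\overline{\partial}^\perp\partial^\perp)$, and commute the shifts past the derivatives. But the one step that carries content, Step 1, is backwards, and Step 2's conclusion does not follow from your setup.

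Since $dA+irA\rho=(\partial A)\eta+(\overline{\partial}A)\overline{\eta}$ with $\partial A\in\Gamma(\uC_{r+1})$, the form $\eta$ carries weight $-1$ and $\overline{\eta}$ carries weight $+1$. So in slot $r$ the weight-compatible terms are $\eta A_{r+1}$ and $\overline{\eta}A_{r-1}$, not $\eta A_{r-1}$ and $\overline{\eta}A_{r+1}$. Under your pairing, the $\eta$-coefficient in slot $r$, which must have weight $r+1$, receives $c_rA_{r-1}$, which has weight $r-1$. Hence your $\Theta^{1,0},\Theta^{0,1}$ are not $\SO(2)$-equivariant, contrary to your remark that ``the weights match''.

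Concretely, use the identification $(a,b)\leftrightarrow a-ib$, which is the one giving $\Lambda\leftrightarrow i\rho$ as (\ref{eq:dA}) requires. Then $\Psi\leftrightarrow\overline{\eta}$ and $\Psi^t\leftrightarrow\eta$. The normal block has $c_r\Psi$ in position $(r,r-1)$ and $-c_{r+1}\Psi^t$ in position $(r,r+1)$, so
\[
(\nabla^\perp V)_r=\nabla A_r-c_{r+1}\,\eta\,A_{r+1}+c_r\,\overline{\eta}\,A_{r-1}.
\]
Thus $\partial^\perp=\partial-U$ and $\overline{\partial}^\perp=\overline{\partial}+L$. The lowering shift $(UV)_r=c_{r+1}A_{r+1}$ sits with $\partial$, and the raising shift $(LV)_r=c_rA_{r-1}$ sits with $\overline{\partial}$.

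This matters for the cross terms. With your assignment, $\partial^\perp=\partial+L$ and $\overline{\partial}^\perp=\overline{\partial}-U$, so your own list of mixed terms gives $-4\partial U+4\overline{\partial}L$. Its slot-$r$ component is $-4c_{r+1}\partial A_{r+1}+4c_r\overline{\partial}A_{r-1}$, which is not even of weight $r$. It is also not the $4\partial(c_rA_{r-1})-4\overline{\partial}(c_{r+1}A_{r+1})$ you then assert.

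With the correct assignment, $\partial L=L\partial$ and $U\overline{\partial}=\overline{\partial}U$ hold because the coefficients are constant. The mixed terms are then $2(\partial L+L\partial)-2(U\overline{\partial}+\overline{\partial}U)=4\partial L-4\overline{\partial}U$, which is exactly the stated formula. The zeroth-order part is $-2(UL+LU)$. Your zeroth-order computation survives the swap, since $UL+LU$ is symmetric in the two shifts, though your labels for which product gives $c_r^2$ and which gives $c_{r+1}^2$ are interchanged.

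So your ``main obstacle'' needs no $n=3$ test case: the weight count alone fixes the pairing. Once it is fixed, your argument is the paper's proof.
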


\begin{proof}
    From (\ref{eq:so3irrmc}), and switching to complex notation, the connection form for the normal connection is
    \begin{equation*}
        \begin{bmatrix}
           2 i \rho & - c_3 \eta & 0 & & & & \\
            c_3 \overline{\eta} & 3 i \rho & - c_4 \eta & 0 & & & \\
            0 & c_4 \overline{\eta} & 4 i \rho  & -c_5 \eta & \ddots  & &  \\
             & \ddots & \ddots & \ddots & \ddots & & \\
             & & & & (n-1)i \rho & -c_n {\eta} \\
             & & & & c_n \overline{\eta} & n i \rho
        \end{bmatrix}.
    \end{equation*}
    It follows that we have equations
    \begin{equation*}
        \partial^{\perp} = \partial - U, \:\:\:\: \overline{\partial}^\perp = \overline{\partial} + L,
    \end{equation*}
    where $U$ and $L$ are given by
    \begin{equation*}
        U = \begin{bmatrix}
            0 & c_3 & 0 & & \\
            0 & 0 & c_4 & 0 & \\
             & \ddots & \ddots & \ddots & \\
             & & & & c_n \\
             & & & 0 & 0
        \end{bmatrix}, \:\:\:\: L = \begin{bmatrix}
            0 & 0 &  & & \\
            c_3 & 0 & 0 &  & \\
             & c_4 & \ddots & \ddots & \\
             & & \ddots & & \\
             & & & c_n & 0
        \end{bmatrix}.
    \end{equation*} 
We may now calculate
    \begin{align*}
        \Delta^\perp & = 2 \left( \partial^\perp \overline{\partial}^\perp + \overline{\partial}^\perp \partial^\perp \right) \\
        & = 2 \left( (\partial - U)(\overline{\partial} + L) + (\overline{\partial} + L)(\partial - U)  \right) \\
        & = \Delta - 4  \overline{\partial}U + 4  \partial L - 2 \, (UL + LU),
    \end{align*}
where we have used that $U\overline{\partial} = \overline{\partial} U$ and $\partial L = L \partial$.
\end{proof}
\noindent Therefore, the Jacobi operator of $\Sigma_n$ is
\begin{equation} \label{eq:Jacobi-operator}
    \mathscr{J} \begin{bmatrix}
            A_2 \\
            A_3 \\
            \vdots \\
            A_{n-1} \\
            A_{n}
        \end{bmatrix} = - \Delta \begin{bmatrix}
            A_2 \\
            A_3 \\
            \vdots \\
            A_{n-1} \\
            A_{n}
        \end{bmatrix} - 4 \, \partial \begin{bmatrix}
            0 \\
            c_3 A_2 \\
            \vdots \\
            c_{n-1} A_{n-2} \\
            c_{n} A_{n-1}
        \end{bmatrix} + 4 \, \overline{\partial} \begin{bmatrix}
            c_3 A_3 \\
            c_4 A_4 \\
            \vdots \\
            c_{n-1} A_{n-1} \\
            0
        \end{bmatrix} + 2 \begin{bmatrix}
            (-c_2^2 + c_3^2 - 1) A_2 \\
            (c_3^2 + c_4^2 - 1) A_3 \\
            \vdots \\
            (c_{n-1}^2+c_n^2 - 1) A_{n-1} \\
            (c_n^2 - 1) A_n
        \end{bmatrix}\!.
\end{equation}

\begin{rmk} Above, we worked with the complex line bundles $\uC_r \cong \Sym^r(T^{1,0}S^2) \to S^2$.  An alternative approach is to work with the real vector bundles $\Sym^r_0(TS^2) \to S^2$.  Indeed, the Levi-Civita connection on $S^2$ induces a covariant derivative operator $\nabla \colon \Gamma(\Sym^r_0(TS^2)) \to \Gamma( \Sym^r_0(TS^2) \otimes TS^2)$.  Moreover, there is a decomposition
$$\Sym^r_0(TS^2) \otimes TS^2 \cong \Sym^{r+1}_0(TS^2) \oplus \Sym^{r-1}_0(TS^2),$$
so $\nabla = \partial^+ \oplus \partial^-$ for a pair of first-order differential operators $\partial^{\pm} \colon \Gamma(\Sym^r_0(TS^2)) \to \Gamma(\Sym^{r \pm 1}_0(TS^2))$.
\end{rmk}

\subsection{The Jacobi spectrum on isotypical components}

\indent \indent Our next step is to decompose the space of $L^2$ normal vector fields $L^2(\Sigma_n, N\Sigma_n)$ into a direct sum of isotypical components, and then to express the Jacobi operator on each component.  In general, for a compact homogeneous space $\G / \mathrm{H}$ with a homogeneous vector bundle $E \to \G / \mathrm{H}$ modeled on a $\mathrm{H}$-representation $E_0$, the Frobenius Reciprocity Theorem provides a $\G$-invariant decomposition
\begin{equation*}
    L^2(\G / \mathrm{H}, E) = \bigoplus_{\rho \in \widehat{\G}} V_{\rho} \otimes \mathrm{Hom}_{\mathrm{H}} (V_{\rho}, E_0),
\end{equation*}
where $\widehat{\G}$ is the set of equivalence classes of finite-dimensional, irreducible complex $\G$-modules.  For our homogeneous vector bundle $\uC_k \to S^2$ under consideration, we have $\G = \SO(3),$ $\mathrm{H} = \SO(2),$ $E_0 = \C_k$, and
\begin{equation*}
    \mathrm{Hom}_{\SO(2)} ( \Hc_\ell, \C_k) = \begin{cases}
        \left\lbrace 0 \right\rbrace & \ell < k \\
        \C & \ell \geq k,
    \end{cases}
\end{equation*}
so that Frobenius Reciprocity gives the $\SO(3)$-invariant decomposition
\begin{equation}\label{eq:frobrecipCk}
    L^2(S^2, \uC_k) = \bigoplus_{\ell = k}^\infty \Hc_\ell^\C
\end{equation}
in which $\Hc_\ell^\C = \Hc_\ell \otimes_{\R} \C$. Therefore, from (\ref{eq:normbundiso}) and (\ref{eq:frobrecipCk}), there is an $\SO(3)$-invariant decomposition
\begin{equation} \label{eq:Isotypical}
\begin{aligned}
    L^2(\Sigma_n, N \Sigma_n) & \cong L^2(S^2, \uC_2) \oplus L^2(S^2, \uC_3) \oplus \cdots \oplus L^2(S^2, \uC_n) \\
    & = \left( \Hc_2^\C \oplus \Hc_3^\C \oplus \Hc_4^\C \oplus \cdots \right) \oplus \left( \Hc_3^\C \oplus \Hc_4^\C \oplus \Hc_5^\C \oplus \cdots \right) \oplus \cdots \\
    & \ \ \ \ \ \cdots \oplus \left( \Hc_n^\C \oplus \Hc_{n+1}^\C \oplus \Hc_{n+2}^\C \oplus \cdots \right) \\
    & = \bigoplus_{\ell = 2}^\infty \left(\Hc_\ell^\C\right) ^{\oplus\, m(\ell)},
 \end{aligned}
\end{equation}
where the multiplicity $m(\ell)$ is given by
\begin{equation*}
    m(\ell) = \begin{cases}
        \ell - 1 & 2 \leq \ell \leq n - 1 \\
        n - 1 & \ell \geq n.
    \end{cases}
\end{equation*}
Since the immersion $\Sigma_n \to S^{2n}$ is $\SO(3)$-equivariant, the Jacobi operator $\mathscr{J}$ preserves each isotypical component $I_\ell := \left(\Hc_\ell^\C\right) ^{\oplus\, m(\ell)}$.  Note that $\dim(I_\ell) = m(\ell) d(\ell)$, where $d(\ell) := \dim(\Hc_\ell^\C) = 2(2\ell + 1)$.  The following diagram summarizes the situation:
$$\begin{tikzcd}
{L^2(N\Sigma_n)} \arrow[r, "=", phantom]       & L^2(\underline{\mathbb{C}}_2) \arrow[r, "\oplus", phantom] \arrow[r, "\partial", bend left=35]  & L^2(\underline{\mathbb{C}}_3) \arrow[r, "\oplus", phantom] \arrow[r, "\partial", bend left=35] \arrow[l, "\overline{\partial}", bend left=35] & L^2(\underline{\mathbb{C}}_4) \arrow[r, "\oplus", phantom] \arrow[l, "\overline{\partial}", bend left=35] & \cdots \arrow[r, "\oplus", phantom] & L^2(\underline{\mathbb{C}}_n) \\
I_2 \arrow[r, "=", phantom]                              & \mathcal{H}_2^{\mathbb{C}}                                                                     &                                                                                                &                                                            &                                     &                               \\
I_3 \arrow[r, "=", phantom]                              & \mathcal{H}_3^{\mathbb{C}} \arrow[r, "\oplus", phantom]                                        & \mathcal{H}_3^{\mathbb{C}}                                                                     &                                                            &                                     &                               \\
I_4 \arrow[r, "=", phantom] \arrow[d, "\vdots", phantom] & \mathcal{H}_4^{\mathbb{C}} \arrow[r, "\oplus", phantom]                                        & \mathcal{H}_4^{\mathbb{C}} \arrow[r, "\oplus", phantom]                                        & \mathcal{H}_4^{\mathbb{C}}                                 &                                     &                               \\
{}                                                       &                                                                                                &                                                                                                &                                                            &                                     &                              
\end{tikzcd}$$
\indent We now seek a convenient basis of $I_\ell$.  As we now explain, such a basis will be provided by applying the $\partial$ operators to a $\Delta$ eigenbasis of the $\mathcal{H}_\ell^\C$-component of $L^2(\underline{\C}_2)$.  For this we require the following proposition.

\begin{prop} \label{cor:evectshifting} ${}$
\begin{enumerate}[(a)]
\item We have the identity $\overline{\partial} \partial A = \tfrac{1}{4} \left( \Delta + r K \right) A$ for $A \in \Gamma(\uC_r)$.
\item    If $A \in \Gamma(\uC_r)$ is an eigentensor for $\Delta$ with eigenvalue $\lambda,$ then $\partial A \in \Gamma(\uC_{r+1})$ is an eigentensor for $\Delta$ with eigenvalue $\lambda + (2r + 1) K$ and $\overline{\partial} A \in \Gamma(\uC_{r-1})$ is an eigentensor for $\Delta$ with eigenvalue $\lambda - (2 r - 1) K$.
\item The eigenvalues of $\Delta$ on $\Gamma(\uC_r)$ are $\left(-\ell(\ell+1) + r^2 \right) K$ for $\ell = r, r+1, \ldots$ with multiplicity $2\left(2 \ell + 1 \right)$, and the corresponding eigenspaces are equal to the $\mathcal{H}^{\C}_\ell$ component of $\Gamma(\uC_r).$
\end{enumerate}
\end{prop}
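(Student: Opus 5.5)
Parts (a) and (b) follow algebraically from the two identities already established; part (c) then comes from combining them with Frobenius reciprocity (\ref{eq:frobrecipCk}) and an induction on $r$ that ladders down to the scalar Laplacian.

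\emph{Part (a).} Write $X = \partial\overline{\partial}$ and $Y = \overline{\partial}\partial$. Proposition \ref{prop:CrLaplacian}(a) gives $\Delta A = 2(X+Y)A$, and Proposition \ref{prop:weitzenbock} gives $(X - Y)A = -\tfrac12 rKA$ on $\Gamma(\uC_r)$. Subtracting the second from $\tfrac12$ of the first yields $2YA = \tfrac12\Delta A + \tfrac12 rKA$, which is the identity in (a).

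\emph{Part (b).} This is immediate from the commutation relations in Proposition \ref{prop:CrLaplacian}(b). For instance, if $\Delta A = \lambda A$, then
$$\Delta \partial A = \partial \Delta A + (2r+1)K\partial A = (\lambda + (2r+1)K)\partial A,$$
and similarly for $\overline{\partial}$. (If $\partial A = 0$, the statement is vacuous or should be read as ``$\partial A$ is zero or an eigentensor.'')

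\emph{Part (c).} Since $\Delta$ is $\SO(3)$-invariant, Schur's lemma applied to (\ref{eq:frobrecipCk}), where each $\Hc^\C_\ell$ occurs with multiplicity one in $L^2(S^2,\uC_r)$, shows that $\Delta$ acts on each $\Hc^\C_\ell$ component by a scalar $\lambda_{r,\ell}$. The multiplicity is then $\dim \Hc^\C_\ell = 2(2\ell+1)$, provided the $\lambda_{r,\ell}$ are distinct for distinct $\ell$. Distinctness is clear from the formula once established, because $-\ell(\ell+1)$ is strictly decreasing in $\ell$.

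To compute $\lambda_{r,\ell}$, I will induct on $r$. The key tool is the injectivity of $\partial$ on the $\Hc^\C_\ell$ component, which I obtain from the formula in (a). For $\ell > r$, I claim $\partial$ maps the $\Hc^\C_\ell$ component of $\Gamma(\uC_r)$ isomorphically onto the $\Hc^\C_\ell$ component of $\Gamma(\uC_{r+1})$. Indeed, $\partial$ is $\SO(3)$-equivariant, so it preserves isotypic type. It is injective there because $\partial A = 0$ gives, by (a) and $L^2$-pairing, $0 = \langle \overline\partial\partial A, A\rangle = \tfrac14(\lambda_{r,\ell} + rK)\|A\|^2$, so it suffices to know $\lambda_{r,\ell} \neq -rK$. (Alternatively, $\ker\partial$ is finite-dimensional, since $\partial$ is elliptic, and can be identified with anti-holomorphic sections; that is the fiddly point.)

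The base case is $r=0$, where $\Delta$ on functions is the rescaled scalar Laplacian. Here I must fix sign conventions: the claimed formula gives $-\ell(\ell+1)K$, so $\Delta$ is the analyst's (non-positive) Laplacian on the sphere of curvature $K$, whose spherical harmonics of degree $\ell$ have eigenvalue $-\ell(\ell+1)K$. This matches the constant $2$ in Proposition \ref{prop:CrLaplacian}(a), since $\eta$ has length $\sqrt2$ in the metric.

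For the inductive step, by (b), $\lambda_{r+1,\ell} = \lambda_{r,\ell} + (2r+1)K$, which gives
$$(-\ell(\ell+1) + r^2 + 2r + 1)K = (-\ell(\ell+1) + (r+1)^2)K.$$
The injectivity hypothesis holds since $-\ell(\ell+1) + r^2 + r < 0$ for $\ell > r$. The cases $r < 0$ are not needed, and for negative $r$ one would argue symmetrically with $\overline\partial$.

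The main obstacle is ensuring that the $\Hc^\C_\ell$ component is not annihilated by $\partial$, together with pinning down the normalization of $\Delta$ in the base case. The injectivity argument via (a) and the strict inequality handles the former cleanly, so the real care goes into matching the $r=0$ normalization.
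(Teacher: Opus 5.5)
Your proposal is correct and follows essentially the paper's argument. Parts (a) and (b) are the same algebraic manipulations. Part (c) is the same induction on $r$ from the scalar case: you use (a) to show that $\partial$ does not kill the $\Hc_\ell^\C$ component when $\ell > r$, and (b) to shift the eigenvalue. Where the paper gives a terse surjectivity claim for $\partial$, you invoke Schur's lemma with multiplicity one, which makes the identification of eigenspaces with isotypic components cleaner.
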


\begin{proof} \indent (a) This follows immediately from Proposition \ref{prop:weitzenbock} and Proposition \ref{prop:CrLaplacian}(a).
 \\
 
 (b) From the commutation relation $[\Delta, \partial] A = (2r+1) K\, \partial A$ of Proposition \ref{prop:CrLaplacian}(b), we have
    \begin{equation*}
        \Delta ( \partial A ) = \partial ( \Delta A) + (2r+1) K \partial A = \left( \lambda + (2 r + 1) K \right) \partial A,
    \end{equation*}
    and similarly for $\Delta(\overline{\partial} A)$. \\

\indent (c) This is proven by induction on $r.$ The case $r= 0$ is equivalent to the standard fact that the spectrum of Laplacian on real-valued functions on $S^2$ is $-\ell(\ell+1)$ with multiplicity $2 \ell+1.$ Now assume the inductive hypothesis: the eigenvalues of $\Delta$ on $\Gamma(\uC_k)$ are $\left(-\ell(\ell+1) + k^2 \right) K$ for $\ell = k, k+1, \ldots,$ with eigenspaces the $\mathcal{H}^{\C}_\ell$ components of $\Gamma(\uC_k).$ Part (a) implies that $\partial : \Gamma(\underline{\mathbb{C}}_k) \to \Gamma(\underline{\mathbb{C}}_{k+1})$ is surjective, since if $A \in \Gamma(\underline{\mathbb{C}}_k)$ is an eigentensor with eigenvalue $\left(-\ell(\ell+1) + k^2 \right) K$ then $\overline{\partial} \partial A$ is a non-zero multiple of $A$ unless $\ell = k,$ but the decomposition of $\Gamma(\uC_{k+1})$ has no $\mathcal{H}_{k}^\C$ component. Part (b) then implies that the eigentensors for $\Delta$ on $\Gamma(\uC_{k+1})$ are of the form $\partial A,$ where $A$ is an eigentensor for $\Delta$ on $\Gamma(\uC_k).$ If $A \in \Gamma(\uC_k)$ has eigenvalue $\left(-\ell(\ell+1) + k^2 \right)K,$ then part (b) implies $\partial A$ has eigenvalue $\left(-\ell(\ell+1) + k^2 + 2k+1 \right)K,$ which is $\left(-\ell(\ell+1) + (k+1)^2 \right)K.$  
\end{proof}

\indent By Proposition \ref{cor:evectshifting}, we observe that if $A$ is a $\Delta$ eigentensor in the $\mathcal{H}_\ell^\C$ component of $L^2(\uC_2)$, then $\mathscr{J}$ preserves the $m(\ell)$-dimensional subspace
    \begin{equation} \label{eq:SpecialSubspace}
        W_A := \operatorname{span}(A, \partial A, \partial^2 A, \ldots, \partial^{m(\ell)-1} A) < I_\ell.
    \end{equation}
In fact, the $m(\ell)d(\ell)$-dimensional space $I_\ell = \left(\Hc_\ell^\C\right) ^{\oplus\, m(\ell)}$ is the direct sum of $d(\ell) = 2(2 \ell + 1)$ subspaces of the form $W_A$, where the sum is taken over a $\Delta$ eigenbasis of the $\Hc_\ell^\C$ component of $L^2(\uC_2)$.  So, the action of $\mathscr{J}$ on the $m(\ell)$-dimensional subspace $W_A < I_\ell$ determines its action on all of $I_\ell$.  Using this principle, we arrive at the following:

\begin{prop}\label{prop:isospectrum} Let $\ell \geq 2$ be an integer.  The Jacobi spectrum of $\Sigma_n$ restricted to the isotypical component $I_\ell = (\mathcal{H}_\ell^\C)^{\oplus m(\ell)}$ is equal to
    \begin{equation*}
        \mathrm{Spec}_{\ell} ( \Sigma_n) = \begin{cases}
            2(2\ell + 1) \cdot \mathrm{Spec}(M_{\ell}^{(\ell)}) & 2 \leq \ell \leq n-1 \\
            2(2\ell + 1) \cdot \mathrm{Spec}(M_{\ell}) & \ell \geq n,
        \end{cases}
    \end{equation*}
where $M_\ell$ is the $(n-1) \times (n-1)$ tridiagonal matrix given by
\begin{equation*}
    M_{\ell} = \frac{2}{n(n+1)} \begin{bmatrix}
    x_1 & p_{1} & 0   & \dots  &   & 0   \\
    q_1  & x_2       & p_2   & \dots  &   & 0   \\
    0  &     q_2    & \ddots         & \ddots  &  & \vdots    \\
    & & \ddots & & & \\
      &         &      &     &     x_{n-2}      & p_{n-2}   \\
      &         &      &     &     q_{n-2}      & x_{n-1}
  \end{bmatrix},
  \end{equation*}
in which
\begin{equation}
    \begin{aligned}
        x_1 &= \ell(\ell+1) - n(n+1) - 6, \\
         x_r & = \ell(\ell+1) - 2(r+1)^2, & \text{for} \:\:\: 2 \leq r \leq n-1, \\
        p_r &= -\sqrt{\frac{(n+2+r)(n-1-r)}{2n(n+1)}}(\ell+2+r)(\ell - 1 - r), & \text{for} \:\:\: 1 \leq r \leq n-2, \\
        q_r &= -\sqrt{2n(n+1)(n+2+r)(n-1-r)}, & \text{for} \:\:\: 1 \leq r \leq n-2,
    \end{aligned}
\end{equation}
and where $M_{\ell}^{(s)}$ is the principal $(s-1) \times (s-1)$ submatrix of $M_{\ell}$.  The notation $2(2\ell + 1) \cdot \mathrm{Spec}(M_\ell)$ means that each eigenvalue in the spectrum of $M_\ell$ is counted $2(2\ell + 1)$ times.
\end{prop}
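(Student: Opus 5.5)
We compute the matrix of $\mathscr{J}$ on the invariant subspace $W_A$ of (\ref{eq:SpecialSubspace}) in the basis $A, \partial A, \ldots, \partial^{m(\ell)-1}A$. Here $A$ is a $\Delta$-eigentensor in the $\mathcal{H}_\ell^\C$ component of $L^2(\uC_2)$. The plan is to apply (\ref{eq:Jacobi-operator}) to a general element $\sum_{r} a_r\, \partial^{r-2}A$ of $W_A$, with the $\uC_r$-slot equal to $a_r\partial^{r-2}A$. We then reduce each resulting term to a multiple of some $\partial^{s}A$.

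\textbf{Diagonal and $\partial$ terms.} By Proposition \ref{cor:evectshifting}(c), $A$ has $\Delta$-eigenvalue $(4-\ell(\ell+1))K$. By part (b), $\partial^{r-2}A \in \Gamma(\uC_r)$ has eigenvalue $(r^2-\ell(\ell+1))K$. So $-\Delta$ contributes the diagonal entries $(\ell(\ell+1)-r^2)K$. The term $-4\partial(c_{r}A_{r-1})$ sends $a_{r-1}\partial^{r-3}A$ to $-4c_r a_{r-1}\partial^{r-2}A$, which is a subdiagonal entry. The zeroth-order bracket adds $2(c_{r}^2+c_{r+1}^2-1)$ to the diagonal, with the convention $c_{n+1}=0$ and the extra $-c_2^2$ in the first slot.

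\textbf{The $\overline{\partial}$ term (main computational step).} We use Proposition \ref{cor:evectshifting}(a) iteratively:
\[
\overline{\partial}\partial^{r-2}A = \overline{\partial}\partial(\partial^{r-3}A) = \tfrac14\big(\Delta + (r-1)K\big)\partial^{r-3}A = \tfrac{K}{4}\big((r-1)^2+(r-1)-\ell(\ell+1)\big)\partial^{r-3}A.
\]
This equals $-\tfrac{K}{4}(\ell+r)(\ell-r+1)\,\partial^{r-3}A$. So $4\overline{\partial}(c_{r}A_{r})$ contributes the superdiagonal entry $-c_{r}K(\ell+r)(\ell-r+1)$. With the index shift $r\mapsto r+2$, this gives the factor $(\ell+2+r)(\ell-1-r)$ appearing in $p_r$.

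\textbf{Normalization and the remaining checks.} We substitute $c_r^2 = \tfrac12\big(1-\tfrac{r(r-1)}{n(n+1)}\big)$, so that $c_{r}=\sqrt{(n+r)(n-r+1)/(2n(n+1))}$, and factor out $K=\tfrac{2}{n(n+1)}$. The off-diagonal entries of a tridiagonal matrix can be rescaled by a diagonal similarity, since only the products $p_rq_r$ matter. It therefore suffices to check two things. First, $p_rq_r$ matches $(4c_{r+2})(c_{r+2}K(\ell+2+r)(\ell-1-r))$ after scaling. Second, the diagonal entries match: for $s=r+1$, $(\ell(\ell+1)-s^2)+n(n+1)(c_s^2+c_{s+1}^2-1)$ should equal $\ell(\ell+1)-2s^2$, i.e.\ $x_r$. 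That identity is a one-line computation from the formula for $c$, and the first row similarly gives $x_1$ using the $-c_2^2$ term. We expect this bookkeeping of constants and index shifts to be the main obstacle. There is no conceptual difficulty, but sign and index conventions must be tracked carefully. The paper's specific asymmetric choice of $p_r,q_r$ presumably corresponds to the basis $\partial^{r}A$ times some fixed constants, and we would reverse-engineer that scaling.

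\textbf{Spectrum on $I_\ell$.} We still need $A,\ldots,\partial^{m(\ell)-1}A$ to be a basis of $W_A$ and $\partial^{m(\ell)-1}A \neq 0$. Each lies in a different summand, and $\partial^{k}A\neq 0$ whenever $k+2\le \ell$, by Proposition \ref{cor:evectshifting}(a): there $\overline{\partial}\partial$ acts by a nonzero scalar as long as the target bundle still contains $\mathcal{H}_\ell^\C$. For $\ell\le n-1$ the chain stops at $\uC_\ell$, since $\partial$ into $\uC_{\ell+1}$ kills the $\mathcal{H}_\ell$ component. So the matrix is the principal $(\ell-1)\times(\ell-1)$ block $M_\ell^{(\ell)}$. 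For $\ell\ge n$ it is the full $M_\ell$. Finally, $I_\ell$ is the direct sum of the $d(\ell)=2(2\ell+1)$ copies $W_A$ over an eigenbasis, and each copy carries the same matrix. Hence each eigenvalue appears with multiplicity $2(2\ell+1)$.
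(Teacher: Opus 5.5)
Your proposal is correct and follows the paper's own proof. Both compute the matrix of $\mathscr{J}$ on $W_A$ in the basis $A,\partial A,\ldots,\partial^{m(\ell)-1}A$ using (\ref{eq:Jacobi-operator}) and Proposition \ref{cor:evectshifting}(a), then count multiplicity $2(2\ell+1)$ from $I_\ell=\bigoplus W_A$. Two small points. No diagonal rescaling is actually needed: since $c_{r+2}=\sqrt{(n-1-r)(n+2+r)/(2n(n+1))}$, the unscaled basis gives exactly $-4c_{r+2}=Kq_r$ and $-c_{r+2}K(\ell+2+r)(\ell-1-r)=Kp_r$. Also, your factor $(r-1)r-\ell(\ell+1)$ for $\overline{\partial}\partial$ is the right one; the paper's displayed intermediate line writes $r^2$ where $r(r+1)$ is meant, though its final $p_{r-1}$ agrees with yours.
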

\begin{proof} The result follows from expressing the action of $\mathscr{J}$ on the subspaces $W_A$ as a matrix with respect to the basis $A, \partial A, \partial^2 A, \ldots, \partial^{m(\ell)-1} A$.  Indeed, let $\mathbf{e}_r = \partial^{r-1}A$ for $1 \leq r \leq m(\ell)$.  For $2 \leq r \leq m(\ell)-1$,  by using formula (\ref{eq:Jacobi-operator}) for $\mathscr{J}$ together with Proposition \ref{cor:evectshifting}(a) and the expressions for $K$ and $c_j$, we calculate
    \begin{equation*}
        \begin{aligned}
            \mathscr{J} \mathbf{e}_r &= -\Delta(\partial^{r-1} A) - 4 \partial (c_{r+2} \partial^{r-1} A) + 4 \overline{\partial} (c_{r+1} \partial^{r-1} A ) + 2 (c_{r+1}^2 + c_{r+2}^2 - 1) \partial^{r-1} A \\
            &= \left( \ell (\ell+1) - (r+1)^2 \right) K \mathbf{e}_r - 4 c_{r+2} \mathbf{e}_{r+1} + c_{r+1} (r^2 - \ell (\ell+1))K \mathbf{e}_{r-1} + 2 (c_{r+1}^2 + c_{r+2}^2 - 1) \mathbf{e}_r \\
            &= p_{r-1} \mathbf{e}_{r-1} + x_r \mathbf{e}_r + q_r \mathbf{e}_{r+1}.
        \end{aligned}
    \end{equation*}
    The expressions for $\mathscr{J} \mathbf{e}_1$ and $\mathscr{J} \mathbf{e}_{m(\ell)}$ are computed similarly.
\end{proof}

\subsection{The index and nullity}

\indent \indent By Proposition \ref{prop:isospectrum}, the spectra of the matrices $M_\ell$ determine the Jacobi spectrum of $\Sigma_n$.  Conjugating by a diagonal matrix to clear radicals, we have that the Jacobi eigenvalues of $\Sigma_n$ are the union of the eigenvalues of the $(n-1) \times (n-1)$ tridiagonal matrices $N_{\ell}$ (for integers $\ell \geq 2$) given by
\begin{equation*}
    N_\ell = \frac{2}{n(n+1)} \begin{bmatrix}
    x_1 & y_{1} & 0   & \dots  &   & 0   \\
    z_1  & x_2       & y_2   & \dots  &   & 0   \\
    0  &     z_2    & \ddots         & \ddots  &  & \vdots    \\
    & & \ddots & & & \\
      &         &      &     &     x_{n-2}      & y_{n-2}   \\
      &         &      &     &     z_{n-2}      & x_{n-1}
  \end{bmatrix},
\end{equation*}
where
\begin{equation}\label{eq:xyzformulas}
    \begin{aligned}
        x_1 &= \ell(\ell+1) - n(n+1) - 6, & x_r &= \ell(\ell+1) - 2(r+1)^2, & \text{for} \:\:\: 2 \leq r \leq n-1, \\
        y_r &= -(\ell+2+r)(\ell - 1 - r), & z_r &= -(n+2+r)(n-1-r), & \text{for} \:\:\: 1 \leq r \leq n-2.
    \end{aligned}
\end{equation}
\indent Before continuing, we remark that $\Sigma_n$ admits certain well-known Jacobi fields.  For example, ambient $\SO(2n+1)$-rotations give rise to a space of Jacobi fields isomorphic to $\mathfrak{so}(2n+1)/\mathfrak{so}(3) \cong \bigoplus_{\ell=3}^{2n-1} \mathcal{H}_\ell$.  More generally, twistor deformations, by which we mean the $\SO(2n+1; \C)$-action on the twistor lift of $\Sigma_n$ to the twistor space $Z = \SO(2n+1)/\U(n)$, also give rise to Jacobi fields.  The space of those deformations is isomorphic to $\mathfrak{so}(2n+1; \C)/\mathfrak{so}(3;\C) \cong \bigoplus_{\ell=3}^{2n-1} \mathcal{H}_\ell^\C$. \\
\indent We proceed to analyze the spectral properties of $N_\ell$.

\begin{prop}
    We have
    \begin{equation}\label{eq:detform}
        \det (N_\ell) = \left( \frac{2}{n(n+1)} \right)^{n-1} \,\prod_{k=1}^{n-1} (\ell - 2k - 1) (\ell + 2k + 2).
    \end{equation}
\end{prop}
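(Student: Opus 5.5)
The plan is to regard $\det(N_\ell)$ as a polynomial in the single variable $L := \ell(\ell+1)$, and to identify it by locating its roots and its leading coefficient. Two observations make this possible. First, every entry of $N_\ell$ is a polynomial in $L$:
$$x_1 = L - n(n+1) - 6,\qquad x_r = L - 2(r+1)^2,\qquad y_r = -\bigl(L-(r+1)(r+2)\bigr),$$
while the $z_r$ are constants. Second, each factor on the right-hand side of (\ref{eq:detform}) satisfies
$$(\ell-2k-1)(\ell+2k+2) = L - (2k+1)(2k+2).$$

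Let $P(L) := \bigl(\tfrac{n(n+1)}{2}\bigr)^{n-1}\det(N_\ell)$. The formula (\ref{eq:detform}) is then the polynomial identity
$$P(L) = \prod_{k=1}^{n-1}\bigl(L-(2k+1)(2k+2)\bigr),$$
and it suffices to check this at enough values of $L$.

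The first step is the degree and leading coefficient. Expand $\det$ of the tridiagonal matrix, or use the continuant recurrence $D_k = x_kD_{k+1} - y_kz_kD_{k+2}$ for trailing minors. Each term of the expansion is a product of some diagonal entries together with some pairs $y_rz_r$. A pair $y_rz_r$ has degree $1$ in $L$ but replaces two diagonal entries, each of degree $1$. Hence only the product of the diagonal entries contributes $L^{n-1}$, so $P$ is monic of degree $n-1$ in $L$. The values $L_k = (2k+1)(2k+2)$, $k=1,\dots,n-1$, are distinct. It therefore suffices to show $P(L_k)=0$, that is, $\det N_\ell = 0$ at the integers $\ell = 2k+1 \in \{3,5,\dots,2n-1\}$.

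The second step, which I expect to be the main point, produces these zeros from Jacobi fields rather than by computation. Ambient rotations give Jacobi fields forming a copy of $\mathfrak{so}(2n+1)/\mathfrak{so}(3) \cong \bigoplus_{\ell=3}^{2n-1}\mathcal{H}_\ell$, and the sum contains exactly the odd $\ell$. I must check that for each odd $\ell$ in this range the normal projection of the corresponding Killing fields is nonzero. The kernel of $X \mapsto X^\perp$ on $\mathfrak{so}(2n+1)$ consists of Killing fields tangent to $\Sigma_n$. Since $\Sigma_n$ is linearly full and its stabilizer is discrete, this kernel is just $\mathfrak{so}(3)$, so the normal parts are nonzero. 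Thus $\mathscr{J}$ has nontrivial kernel on $I_\ell$ for each odd $\ell$ with $3\le \ell\le 2n-1$. By the decomposition of $I_\ell$ into the subspaces $W_A$ and Proposition \ref{prop:isospectrum}, this forces $0 \in \mathrm{Spec}(M_\ell)$ when $\ell\ge n$, and $0\in\mathrm{Spec}(M_\ell^{(\ell)})$ when $\ell \le n-1$. Since $N_\ell$ is conjugate to $M_\ell$ by a constant diagonal matrix, $\det N_\ell = 0$ whenever $\ell \geq n$.

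For $\ell \le n-1$ one more observation is needed: $y_{\ell-1} = -(2\ell+1)\cdot 0 = 0$. So $N_\ell$ is block lower triangular, with upper-left block the principal $(\ell-1)\times(\ell-1)$ submatrix. Hence $\det N_\ell = \det N_\ell^{(\ell)}\cdot\det(\text{lower block})$. The block $N_\ell^{(\ell)}$ is conjugate by the same diagonal scaling to $M_\ell^{(\ell)}$, which is singular, so again $\det N_\ell = 0$.

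Combining the two steps, the monic degree-$(n-1)$ polynomial $P$ vanishes at the $n-1$ distinct points $L_k$, which gives (\ref{eq:detform}). Should the nondegeneracy check in the second step prove delicate, the fallback is to verify $P(L_k)=0$ directly. This means exhibiting an explicit null vector of $N_{2k+1}$ via the three-term recurrence $z_{r-1}v_{r-1} + x_r v_r + y_r v_{r+1} = 0$, solved from the top. I would guess the null vector has entries that are ratios of products of linear factors in $r$ and $k$, and I would confirm it against small $n$.
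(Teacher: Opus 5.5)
Your proposal is correct and is essentially the paper's argument: the roots at $\ell=3,5,\ldots,2n-1$ come from the Killing Jacobi fields of ambient rotations, and since $N_\ell$ depends on $\ell$ only through $L=\ell(\ell+1)$, a degree and leading-coefficient count determines the determinant (you count directly in $L$, whereas the paper reflects the roots to $\ell=-4,-6,\ldots,-2n$ and counts in $\ell$). You also supply two details the paper leaves implicit: that the normal projections of the $\mathcal{H}_\ell$ Killing fields are nonzero, and that for $\ell\le n-1$ singularity of $N_\ell^{(\ell)}$ gives $\det N_\ell=0$ through the block-triangular structure coming from $y_{\ell-1}=0$; for the first, "discrete stabilizer" is loosely worded, and the clean justification is that a tangent Killing field restricts to a Killing field of the round $\Sigma_n$, hence agrees on $\Sigma_n$ with an element of $\mathfrak{so}(3)$, and linear fullness forces equality.
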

\begin{proof}
    {Consider $p(\ell) = \det(N_\ell)$.  The Jacobi fields arising from ambient $\SO(2n+1)$-rotations imply that $\det(N_\ell)$ must vanish at odd integers $\ell = 3, 5, \ldots, 2n-1$.  Next, noting that $y_r = -\ell(\ell+1) + (r^2 + 3r + 2)$, we see that the $\ell$ dependence in $N_\ell$ is a function of $\ell(\ell+1)$, so that $p(-\frac{1}{2} + \ell) = p(-\frac{1}{2} - \ell)$.  Due to this symmetry, the vanishing at $\ell = 3, 5, \ldots, 2n-1$ implies vanishing at $\ell = -4, -6, \ldots, -2n$.  So, the function $p(\ell) = \det (N_\ell)$ is a polynomial of degree $2(n-1)$ with leading coefficient $(\frac{2}{n(n+1)})^{n-1}$ and there is only one such polynomial vanishing at $\ell = 3, 5, \ldots, 2n-1$ and $\ell = -4, -6, \ldots, -2n$.}
\end{proof}

\begin{lem}   \label{lem:simpevals} Let $\ell \geq 2$ be an integer.
\begin{enumerate}[(a)]
\item For $\ell \geq n$, the eigenvalues of $N_\ell$ are simple.
\item For $2 \leq \ell \leq n-1$, the matrix $N_\ell$ has at least $n - \ell$ negative eigenvalues.  Furthermore, any non-negative eigenvalue of $N_\ell$ is simple.
\item The matrix $N_2$ has all eigenvalues negative.
\end{enumerate}
\end{lem}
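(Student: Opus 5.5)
The plan is to exploit the tridiagonal structure of $N_\ell$ and the signs of the products of its off-diagonal entries $y_r z_r$. For $1 \leq r \leq n-2$ we always have $z_r = -(n+2+r)(n-1-r) < 0$, because $n-1-r \geq 1$. The sign of $y_r = -(\ell+2+r)(\ell-1-r)$ is therefore the sign of $r-(\ell-1)$. The basic tool is the standard fact for a real tridiagonal matrix: if every product $y_r z_r$ is positive, a diagonal similarity turns it into a real symmetric Jacobi matrix with nonzero off-diagonal entries $\pm\sqrt{y_r z_r}$. Such a matrix has real and simple spectrum, since the eigenvector is determined by its first entry through the three-term recursion.

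For (a), take $\ell \geq n$. Then $\ell - 1 - r \geq n-1-(n-2) = 1$ for all $r \leq n-2$, so $y_r < 0$ and $y_r z_r > 0$ for every $r$. The basic tool gives that the eigenvalues of $N_\ell$ are real and simple.

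For (b), take $2 \leq \ell \leq n-1$. The entry $y_{\ell-1}$ vanishes, so $N_\ell$ is block lower triangular. It has an upper-left block $T$ on indices $1,\dots,\ell-1$, which equals $N_\ell^{(\ell)}$ and has size $\ell - 1$. It has a lower-right block $B$ on indices $\ell, \dots, n-1$, of size $n-\ell$. Hence $\mathrm{Spec}(N_\ell) = \mathrm{Spec}(T) \cup \mathrm{Spec}(B)$.
\begin{enumerate}[(i)]
\item In $T$ we have $r \leq \ell-2$, so $y_r z_r > 0$. By the basic tool the eigenvalues of $T$ are real and simple.
\item In $B$ we have $r \geq \ell$. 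The diagonal entries satisfy $x_r = \ell(\ell+1) - 2(r+1)^2 \leq \ell(\ell+1) - 2(\ell+1)^2 < 0$. The off-diagonal products satisfy $y_r z_r < 0$.
\item A diagonal similarity writes $B = D + S$, where $D$ is a negative definite diagonal matrix and $S$ is real skew-symmetric tridiagonal. For a unit eigenvector $v$ with eigenvalue $\mu$, we get $\mathrm{Re}\,\mu = \langle Dv, v\rangle < 0$. So every eigenvalue of $B$ has negative real part.
\end{enumerate}
From this, any non-negative eigenvalue of $N_\ell$ must come from $T$, hence is simple. It remains to see that $B$ supplies $n-\ell$ genuinely negative, i.e. real, eigenvalues.

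The main obstacle is exactly this reality of $\mathrm{Spec}(B)$: a matrix with $y_r z_r<0$ need not have real spectrum. Here is what I would try, in order.
\begin{enumerate}[(i)]
\item Reduce to the symmetric-plus-skew form $D + S$ above and check whether $\|S\|$ is small relative to the gaps between the diagonal entries. The gaps are $2[(r+2)^2 - (r+1)^2] = 2(2r+3)$, against off-diagonal sizes of order $\sqrt{(\ell+2+r)(r+1-\ell)(n+2+r)(n-1-r)}$. If the gaps dominate, a Gershgorin-type argument with suitably rescaled discs would separate $n-\ell$ disjoint discs on the negative real axis. Each disjoint disc contains exactly one eigenvalue, and that eigenvalue is real because the matrix is real.
\item Alternatively, try to identify $B$, after reversing the index order and applying the symmetry $\ell \mapsto -\ell-1$ under which $\ell(\ell+1)$ is invariant, with a block of some $N_{\ell'}$ that has positive off-diagonal products.
\end{enumerate}
If neither works, I would settle for the weaker conclusion that is actually needed for the index count: all eigenvalues of $B$ lie in the open left half-plane, so none contributes to the index or nullity.

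For (c), take $\ell = 2$. Then $T$ is the $1\times1$ block $x_1 = 6 - n(n+1) - 6 = -n(n+1) < 0$, and $B$ has only eigenvalues with negative real part by the argument in (b), which are negative once reality is settled. Hence all eigenvalues of $N_2$ are negative. For $n = 2$ the matrix $N_2$ is just $x_1$, and the statement is immediate.
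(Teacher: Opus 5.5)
\textbf{The gap is the reality of $\mathrm{Spec}(B)$, and neither of your proposed repairs closes it.} Your arguments for (a), for the upper block $T$, and for the second assertion of (b) are sound. They essentially match the paper, which cites Parlett for simplicity when the off-diagonal entries are nonzero and of one sign. For the lower block the paper uses a sign-propagation argument on the eigenvector equations instead of your decomposition $D+S$. As you say yourself, nothing shows that the $n-\ell$ eigenvalues of $B$ are real, and without that neither the first assertion of (b) nor (c) follows.

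Neither repair works. A Gershgorin-type localization fails because the eigenvalues of $B$ are not near its diagonal entries. For $n=5$, $\ell=2$ the bracketed block is
\[
\begin{bmatrix} -12 & 6 & 0 \\ -18 & -26 & 14 \\ 0 & -10 & -44 \end{bmatrix},
\]
with eigenvalues $-30,-28,-24$. Separating the first two discs under any diagonal rescaling $\mathrm{diag}(1,t,ts)$ would need $6t+18/t+14s<14$, which is impossible since $6t+18/t\ge 2\sqrt{108}>14$. The symmetry route cannot produce positive off-diagonal products either: the sign of each $y_rz_r$ is unchanged by diagonal similarity, by index reversal, and by $\ell\mapsto-\ell-1$ (which fixes the matrix).

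Your fallback, that all eigenvalues of $B$ lie in the open left half-plane, is strictly weaker than the lemma. It is also not what is used downstream. Proposition~\ref{prop:negevalcount} needs $N_2$ to have $n-1$ genuinely negative eigenvalues, and (\ref{eq:submatrixindex}) subtracts $n-\ell$ negative eigenvalues contributed by $B$.

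\textbf{The paper does not close this gap either.} Its sign-propagation argument only excludes real eigenvalues $\lambda\ge 0$ and then tacitly treats the spectrum as real. A matrix with that sign pattern can have non-real eigenvalues, e.g.\ $\bigl(\begin{smallmatrix} -1 & 1 \\ -1 & -1 \end{smallmatrix}\bigr)$. On non-real eigenvalues, your half-plane argument is actually the stronger of the two.

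\textbf{What does close it is an exact computation.} Write $a_r=(r+1)(r+2)$. The bracketed entries are then $x_r=\ell(\ell+1)-a_r-a_{r-1}$ for $r\ge 2$, $y_r=a_r-\ell(\ell+1)$, and $z_r=a_r-n(n+1)$. If $Bv=\mu v$, set $v_{\ell-1}=v_n=0$ and $w_r=v_{r+1}-v_r$. The eigen-equations become
\[
(a_r-\ell(\ell+1))\,w_r-(a_{r-1}-n(n+1))\,w_{r-1}=(\mu+n(n+1))\,v_r, \qquad \ell\le r\le n-1 .
\]
Differencing these gives an $(n-\ell+1)\times(n-\ell+1)$ tridiagonal matrix in $w$ with vanishing column sums. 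Its spectrum is $\{0\}\cup\bigl(\mathrm{Spec}(B)+n(n+1)\bigr)$. Up to transpose it is a dual Hahn Jacobi matrix, whose spectrum is $\{x(x-1)\}_{x=0}^{n-\ell}$. Hence, before the factor $\tfrac{2}{n(n+1)}$,
\[
\mathrm{Spec}(B)=\{\,k(k+1)-n(n+1) \;:\; 0\le k\le n-\ell-1\,\},
\]
which gives $-30,-28,-24$ in the example above. These eigenvalues are real, distinct and negative. That proves the first assertion of (b), and together with $x_1=-n(n+1)<0$ it proves (c).
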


\begin{proof}
    (a) Note that $z_r < 0$ for $1 \leq r \leq n-2$.  Moreover, when $\ell \geq n$, we also have that $y_r < 0$ for $1 \leq r \leq n-2$.  A result in the theory of tridiagonal matrices \cite{ParlettBook} states that if the off-diagonal entries of a tridiagonal matrix have the same sign and are non-vanishing, then all of its eigenvalues are simple.     \\
    
     (b) In this case, $y_{\ell-1} = 0$ so the matrix $N_\ell$ is block lower-triangular with the blocks on the diagonal of size $(\ell -1) \times (\ell - 1)$ and $(n-\ell) \times (n - \ell)$. We will consider these two blocks separately. \\
     
	\indent In the $(\ell-1) \times (\ell -1)$ block, the off-diagonal entries are $y_1, \ldots, y_{\ell - 2}$ and $z_1, \ldots, z_{\ell - 2}$, all of which are negative.   Therefore, again by \cite{ParlettBook}, all the eigenvalues of this $(\ell - 1) \times (\ell - 1)$ block are simple.  \\
     
     \indent In the $(n-\ell) \times (n - \ell)$ block, the signs are arranged as
    \begin{equation*}
    \begin{bmatrix}
    - & + & 0   & \dots  &   & 0   \\
    - & -       & +   & \dots  &   & 0   \\
    0  &     -    & \ddots         & \ddots  &  & \vdots    \\
    & & \ddots & & & \\
      &         &      &     &     -      & +   \\
      &         &      &     &     -     & -
  \end{bmatrix}.
\end{equation*}
We claim that  if a matrix has this sign pattern, then all of its eigenvalues are negative.  To verify this, set $m = n-\ell$, let $A$ be an $m \times m$ matrix with the above sign pattern, and let $a_{ij}$ be the absolute value of the $(i,j)$th entry of $A$.  Let $v = (v_1, \ldots, v_m)$ be a non-zero eigenvector of $A$ with eigenvalue $\lambda$.  Then the eigenvalue condition $Av = \lambda v$ may be rewritten as the following system:
    \begin{align}
    a_{12}v_2 & = \left( a_{11} + \lambda \right) v_1 \label{eq:eigen1} \\
    a_{k,k+1}v_{k+1} & = a_{k,k-1}v_{k-1} + \left(a_{kk} + \lambda \right) v_k, \ \ \ (2 \leq k \leq m-1) \label{eq:eigen2} \\
    0 & = a_{m,m-1} v_{m-1} + \left( a_{mm} + \lambda \right) v_m. \label{eq:eigen3}
\end{align}
Suppose for the sake of contradiction that $\lambda \geq 0$.  If $v_1 > 0$, then (\ref{eq:eigen1}) and (\ref{eq:eigen2}) imply that $v_k > 0$ for each $k \geq 2$, which violates (\ref{eq:eigen3}).  Similarly, if $v_1 < 0$, then (\ref{eq:eigen1}) and (\ref{eq:eigen2}) imply that $v_k < 0$ for each $k \geq 2$, violating (\ref{eq:eigen3}).  Thus, we must have $v_1 = 0$.  But then (\ref{eq:eigen1}) and (\ref{eq:eigen2}) imply that $v_k = 0$ for each $k \geq 2$, contradicting the assumption that $v$ is non-zero.  \\
    
   (c) As in the proof of part (b), we regard $N_2$ as block lower-triangular with blocks on the diagonal of size $1 \times 1$ and $(n-2) \times (n-2)$.  The proof of part (b) showed that the $(n-2) \times (n-2)$ block has all of its eigenvalues negative.  On the other hand, since $x_1 = -n(n+1) - 12 < 0$, we see that the $1 \times 1$ block also contributes a negative eigenvalue.  
\end{proof}

\begin{prop}\label{prop:negevalcount}
    The number of negative eigenvalues of the matrix $N_\ell$ is
    \begin{equation*}
        \operatorname{Ind}(N_\ell) = \operatorname{max} \left( \left\lfloor \frac{2n - \ell}{2} \right\rfloor , 0 \right)\!.
    \end{equation*}
\end{prop}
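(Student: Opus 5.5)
The plan is to treat $\ell$ as a real parameter $t$ and follow the eigenvalues of $N_t$ continuously in $t$. The number of negative eigenvalues can only change when an eigenvalue passes through $0$, which forces $\det(N_t)=0$. By (\ref{eq:detform}), the factors $(t+2k+2)$ are positive for $t\ge 2$, so on $[2,\infty)$ the determinant vanishes exactly at the odd integers $t=2k+1$, $1\le k\le n-1$, i.e.\ at $t=3,5,\ldots,2n-1$. Each of these is a \emph{simple} root of $\det(N_t)$. Hence at each such $t$ exactly one eigenvalue vanishes, and it does so with nonzero $t$-derivative, so it genuinely crosses $0$. (The derivative of the determinant at a simple eigenvalue $0$ is the product of the other eigenvalues times the derivative of the vanishing one.) So each crossing changes the count of negative eigenvalues by exactly $\pm 1$.

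\textbf{Step 1: large $t$.} Conjugate $N_t$ by a diagonal matrix to make it symmetric with off-diagonal entries $\pm\sqrt{y_r z_r}$. This is legitimate whenever $y_rz_r\ge 0$, which holds for all $r\le n-2$ once $t\ge n-1$. For large $t$ the diagonal entries are of order $t^2$ while $\sqrt{y_rz_r}=O(t)$. By Gershgorin, $N_t$ is then positive definite, so the count is $0$.

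\textbf{Step 2: descend and determine the sign of each crossing.} Moving $t$ downward from $+\infty$ to $2$, we meet $n-1$ crossings, each changing the count by $\pm1$. By Lemma~\ref{lem:simpevals}(c), the count at $t=2$ is $n-1$. Starting from $0$, $n-1$ steps of $\pm1$ can reach $n-1$ only if every step is $+1$. So each crossing adds exactly one negative eigenvalue as $t$ decreases.

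\textbf{Step 3: read off the formula.} For an integer $\ell\ge 2$, the count equals the number of odd integers $2k+1>\ell$ with $1\le k\le n-1$. This is $\max\!\big(n-1-\lfloor (\ell-1)/2\rfloor,\,0\big)=\max\!\big(\lfloor (2n-\ell)/2\rfloor,0\big)$. At odd $\ell=2k+1$ itself, the eigenvalue sitting at $0$ is not counted, and the formula correctly gives $n-k-1$.

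The main obstacle is that this continuity argument silently assumes the eigenvalues stay real, so that ``negative eigenvalue'' is meaningful and the count can only change through $0$. For $t\ge n-1$ this is fine by the symmetrization in Step~1. For $2\le t<n-1$, however, some $y_r$ become positive and $N_t$ may have complex eigenvalues. A complex pair could a priori cross the imaginary axis away from $0$, or merge and split on the negative real axis, without affecting the determinant.

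To handle this range, I would first try to avoid non-integer $t$ altogether. At integer $\ell\le n-1$, the block structure used in Lemma~\ref{lem:simpevals}(b) applies. The lower $(n-\ell)\times(n-\ell)$ block contributes $n-\ell$ negative eigenvalues; I would also check that its eigenvalues are real, for instance by symmetrizing with signs. The upper $(\ell-1)\times(\ell-1)$ block $N^{(\ell)}_t$ has $y_r,z_r<0$ for all $r\le \ell-2$ whenever $t\ge \ell-1$. So it is sign-symmetric, hence has real spectrum, along the whole path $t\in[\ell,\infty)$. Running the same crossing argument on the upper block alone, with the crossings again read off from the determinant of that leading principal submatrix, should give $\lfloor \ell/2\rfloor$ negative eigenvalues there. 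Adding the $n-\ell$ from the lower block gives $\lfloor (2n-\ell)/2\rfloor$. The delicate point is that this needs the zeros of the leading principal minors of $N_t$, which I expect to obtain by the same rotation/twistor-Jacobi-field and symmetry argument used for (\ref{eq:detform}).
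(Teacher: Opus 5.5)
Your Steps~1--3 are the paper's proof. The paper also follows the eigenvalues of $N_\ell$ for real $\ell\in[2,\infty)$, locates the zero eigenvalues at $\ell=3,5,\dots,2n-1$ via (\ref{eq:detform}), and uses the anchor $N_2$ from Lemma~\ref{lem:simpevals}(c) to force every crossing to add a negative eigenvalue as $\ell$ decreases. Your Gershgorin step is cleaner than the paper's appeal to finiteness of the index, which only speaks to integer $\ell$.

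The obstacle you raise is genuine, and the paper's proof tacitly makes the same assumption. Take $n=4$. The matrix $10N_2$ has eigenvalues $-20,-18,-20$, and $-20$ has only a one-dimensional eigenspace. For $t$ slightly above $2$ this double eigenvalue splits into a non-real pair. For instance,
\[
40\,N_{5/2}=\begin{bmatrix}-69&-11&0\\-56&-37&13\\0&-32&-93\end{bmatrix}
\]
has characteristic polynomial $\mu^3+199\mu^2+12211\mu+208845$, whose discriminant is negative (about $-4.3\times10^{9}$). So the number of real negative eigenvalues drops from $3$ to $1$ on $[2,\tfrac{5}{2}]$, although $\det N_t\neq 0$ there. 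If one instead counts eigenvalues with negative real part, nothing in the argument stops a non-real pair from crossing the imaginary axis away from $0$. Relatedly, for a non-symmetrizable family a simple root of the determinant does not force a simple zero eigenvalue, as $\bigl[\begin{smallmatrix}0&1\\ s&0\end{smallmatrix}\bigr]$ shows; the paper gets simplicity from Lemma~\ref{lem:simpevals} instead.

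Your repair, however, does not close the gap, for three reasons.

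(i) In the lower $(n-\ell)\times(n-\ell)$ block one has $y_rz_r<0$, and a real diagonal similarity preserves each product $y_rz_r$. So this block cannot be symmetrized, with signs or otherwise. The sign argument of Lemma~\ref{lem:simpevals}(b) only excludes real eigenvalues $\ge 0$. Reality of this block's spectrum therefore needs its own proof; it does hold in small cases (for $n=4$ the lower block of $10N_2$ has eigenvalues $-18,-20$). Your anchor $N_2$ in Step~2 depends on it as well.

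(ii) The zeros in $t$ of $\det N_t^{(\ell)}$ are not governed by Jacobi fields, since $N_t^{(\ell)}$ represents $\mathscr{J}$ only at $t=\ell$. In general they are not integers. With $T=t(t+1)$ one finds
\[
\det N_t^{(3)}=\bigl(\tfrac{2}{n(n+1)}\bigr)^{2}(T-12)\bigl(T-2n(n+1)-6\bigr).
\]
For $n=4$, $\ell=3$, which is exactly the upper block you would use, the second zero is at $t=\tfrac{-1+\sqrt{185}}{2}$.

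(iii) Even with these zeros known, the path $[\ell,\infty)$ has no anchor at its lower end, since the count at $t=\ell$ is the unknown. The sign of the determinant only records the parity of the count, so the direction of each crossing is undetermined.

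The proposal therefore stops exactly at the step you identified. What is still needed is one of two things:
\begin{itemize}
\item a deformation from a configuration of known inertia along which the spectrum provably stays real, or at least never meets the imaginary axis away from $0$; or
\item a direct inertia computation for $N_\ell^{(\ell)}$ and $N_\ell$ at integer $\ell$.
\end{itemize}
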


\begin{proof}
    Consider the eigenvalues of $N_\ell$ as depending on the real parameter $\ell \in [2,\infty)$. The determinant formula (\ref{eq:detform}) shows that $0$ is an eigenvalue of $N_\ell$ if and only if $\ell = 3, 5, \ldots, 2n-1,$ and Lemma \ref{lem:simpevals} implies that $0$ is a simple eigenvalue for these values of $\ell$. Therefore, the number of negative eigenvalues can only change as $\ell$ crosses one of these values. For $\ell \gg 0,$ all eigenvalues of $N_\ell$ are positive (because otherwise $\mathscr{J}$ would have infinitely many negative eigenvalues), and consequently the determinant formula implies that $N_\ell$ has no negative eigenvalues for $\ell > 2n-1.$ 
    
    Thinking of $\ell$ as decreasing from a value larger than $2n-1,$ the number of negative eigenvalues can only change as $\ell$ passes the values $\ell = 3, 5, \ldots 2n-1,$ and at these values the number may only increase or decrease by $1$. Lemma \ref{lem:simpevals}(c) implies that at each of these $n-1$ points, the number of negative eigenvalues must in fact increase by $1$, because this is the only way for $N_{2}$ to have $n-1$ negative eigenvalues.
\end{proof}

\indent Before calculating the index of $\Sigma_n$, we make one more remark about $N_\ell$ for $2 \leq \ell \leq n-1$.  On one hand, Proposition \ref{prop:negevalcount} implies that $N_\ell$ has $\left\lfloor \frac{2n-\ell}{2} \right\rfloor$ negative eigenvalues.  On the other hand, recalling the proof of Lemma \ref{lem:simpevals}(b), we may view $N_\ell$ as block lower-triangular, and the lower $(n-\ell) \times (n-\ell)$ block contributes $n - \ell$ negative eigenvalues.  Therefore, the index of the principal $(\ell - 1) \times (\ell - 1)$ submatrix of $N_\ell$ is  
\begin{equation}\label{eq:submatrixindex}
    \operatorname{Ind}(N_\ell^{(\ell)}) = \left\lfloor \frac{2n-\ell}{2} \right\rfloor - (n - \ell).
\end{equation}

\begin{thm} \label{thm:IndexResult}
    The index of $\Sigma_n$ is given by
    \begin{equation*}
        \operatorname{Ind}(\Sigma_n) = n(n-1)(2n+1).
    \end{equation*}
\end{thm}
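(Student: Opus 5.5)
By Proposition \ref{prop:isospectrum}, the Jacobi spectrum restricted to $I_\ell$ is $2(2\ell+1)$ copies of the spectrum of $M_\ell^{(\ell)}$ (for $2 \le \ell \le n-1$) or of $M_\ell$ (for $\ell \ge n$). Conjugation by a diagonal matrix does not change eigenvalues, so these spectra agree with those of $N_\ell^{(\ell)}$ and $N_\ell$. Hence
$$\mathrm{Ind}(\Sigma_n) = \sum_{\ell=2}^{n-1} 2(2\ell+1)\,\mathrm{Ind}(N_\ell^{(\ell)}) + \sum_{\ell \ge n} 2(2\ell+1)\,\mathrm{Ind}(N_\ell).$$
The first sum is evaluated with (\ref{eq:submatrixindex}), and the second with Proposition \ref{prop:negevalcount}. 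That proposition shows the second sum is finite: $\mathrm{Ind}(N_\ell) = \lfloor (2n-\ell)/2\rfloor$ for $n \le \ell \le 2n-1$, and it vanishes for $\ell \ge 2n-1$.

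In both ranges the summand is $2(2\ell+1)f(\ell)$, where
$$f(\ell) = \left\lfloor \tfrac{2n-\ell}{2}\right\rfloor - (n-\ell)\,\mathbf{1}_{\ell \le n-1}.$$
For $\ell \le n-1$ this simplifies to $f(\ell) = \lfloor \ell/2 \rfloor$, because $\lfloor (2n-\ell)/2\rfloor = n - \lceil \ell/2\rceil$. So
$$\mathrm{Ind}(\Sigma_n) = \sum_{\ell=2}^{n-1} 2(2\ell+1)\lfloor \ell/2\rfloor + \sum_{\ell=n}^{2n-2} 2(2\ell+1)\left(n - \lceil \ell/2\rceil\right).$$
This also handles the boundary case cleanly: at $\ell = n$ the two formulas meet, and for $\ell \ge 2n-1$ the count is zero.

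The last step is to evaluate this finite sum in closed form. I would pair consecutive values $\ell = 2j,\,2j+1$, which share the same floor or ceiling, so that each sum becomes a polynomial sum in $j$. The parity of $n$ affects where the ranges end, so I would treat $n$ even and $n$ odd separately, or avoid the split by induction on $n$.

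Before doing the general algebra, I would sanity-check against small cases.
\begin{itemize}
\item For $n = 2$, only $\ell = 2$ contributes, with $N_2$ of size $1$ and index $\lfloor 2/2\rfloor = 1$. This gives $2 \cdot 5 = 10 = 2 \cdot 1 \cdot 5$.
\item For $n = 3$, the contributions are $\ell = 2$ with index $1$ (giving $10$), $\ell = 3$ with index $1$ (giving $14$), and $\ell = 4$ with index $1$ (giving $18$). The total is $42 = 3 \cdot 2 \cdot 7$.
\end{itemize}
Both match $n(n-1)(2n+1)$.

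I expect no conceptual obstacle. The work is bookkeeping: getting the floor/ceiling parity cases right and confirming that the principal-submatrix formula (\ref{eq:submatrixindex}) is applied only for $\ell \le n-1$. To guard against errors there, I would check the identity by induction. Passing from $n$ to $n+1$ should add exactly $(n+1)n(2n+3) - n(n-1)(2n+1) = 6n^2 + 2n$.
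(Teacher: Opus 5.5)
Your proposal is correct and follows essentially the same route as the paper. You use the same decomposition via Proposition~\ref{prop:isospectrum}, with Proposition~\ref{prop:negevalcount} and (\ref{eq:submatrixindex}) supplying the counts; your rewriting $f(\ell)=\lfloor \ell/2\rfloor$ for $\ell\le n-1$ is a tidier packaging of the paper's parity split, and the resulting finite sum does evaluate to $n(n-1)(2n+1)$.

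There is one arithmetic slip in your proposed consistency check. The increment from $n$ to $n+1$ is $(n+1)n(2n+3)-n(n-1)(2n+1)=6n^2+4n$ (e.g.\ $42-10=32$), not $6n^2+2n$, so an induction aimed at $6n^2+2n$ would wrongly appear to fail.
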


\begin{proof}
    By Proposition \ref{prop:isospectrum}, we have
    \begin{equation*}
    \begin{aligned}
        \operatorname{Ind}(\Sigma_n) &= \sum_{\ell = 2}^{n-1} 2 ( 2 \ell + 1) \operatorname{Ind}(N_\ell^{(\ell)}) + \sum_{\ell = n}^{2n - 1} 2 (2\ell + 1) \operatorname{Ind}(N_{\ell}).
            \end{aligned}
    \end{equation*}
So, by Proposition \ref{prop:negevalcount} and (\ref{eq:submatrixindex}), we may calculate
        \begin{equation*}
    \begin{aligned}
    \operatorname{Ind}(\Sigma_n) & = \sum_{\ell = 2}^{n-1} 2 ( 2 \ell + 1) \left( \left\lfloor \frac{2n-\ell}{2} \right\rfloor - n + \ell \right) + \sum_{\ell = n}^{2n - 1} 2 (2\ell + 1)  \left\lfloor \frac{2n-\ell}{2} \right\rfloor  \\
        & = \sum_{\ell = 2}^{2n-1} 2 ( 2 \ell + 1)  \left\lfloor \frac{2n-\ell}{2} \right\rfloor  + \sum_{\ell = 2}^{n - 1} 2 (2\ell + 1) (-n+\ell) \\
       & = \sum_{j = 1}^{n-1} 2 ( 4j + 1)(n-j)  + \sum_{j = 1}^{n-1} 2 (4j + 3)(n-j-1)  + \sum_{\ell = 2}^{n - 1} 2 (2\ell + 1) (-n+\ell) \\
        & = n(n-1)(2n+1).
    \end{aligned}
    \end{equation*}
\end{proof}

\begin{rmk} In particular, abbreviating $[k_2, k_3, \ldots, k_m] := (\Hc_2^{\C})^{\oplus k_2} \oplus (\Hc_3^{\C})^{\oplus k_3}  \oplus \cdots \oplus (\Hc_m^\C)^{\oplus k_m}$, we have:
\begin{align*}
\mathrm{Ind}(\Sigma_2) & = \dim( [1]) = 10 \\
\mathrm{Ind}(\Sigma_3) & = \dim( [1, 1, 1] ) = 42 \\
\mathrm{Ind}(\Sigma_4) & = \dim( [1,1, 2, 1,1]) = 108 \\
\mathrm{Ind}(\Sigma_5) & = \dim( [1,1,2,2,2,1,1]) = 220 \\
\mathrm{Ind}(\Sigma_6) & = \dim( [1,1,2,2,3,2,2,1,1] ) = 390 \\
\mathrm{Ind}(\Sigma_7) & = \dim( [1,1,2,2,3,3,3,2,2,1,1] ) = 640.
\end{align*}
Numerical evidence suggests that the Jacobi eigenvalue $-2$ has an eigenspace isomorphic to $\bigoplus_{\ell = 2}^n \mathcal{H}_\ell^\C$, and therefore has multiplicity $m_1 = \sum_{\ell = 2}^n 2(2\ell + 1) = 2(n-1)(n+3)$, but we have not verified this. 
\end{rmk}

\begin{thm} \label{thm:NullityResult}
    The nullity of $\Sigma_n$ is given by
    \begin{equation*}
        \operatorname{Nul}(\Sigma_n) = \left(2n+3 \right) \left(2n-2 \right)\!.
    \end{equation*}
    In particular, every Jacobi field arises from a twistor deformation of $\Sigma_n.$
\end{thm}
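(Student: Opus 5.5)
Since the Jacobi spectrum on $I_\ell$ is, by Proposition \ref{prop:isospectrum}, the spectrum of $M_\ell^{(\ell)}$ (for $2 \le \ell \le n-1$) or of $M_\ell$ (for $\ell \ge n$), each eigenvalue counted $2(2\ell+1)$ times, the nullity is
$$\mathrm{Nul}(\Sigma_n) = \sum_{\ell=2}^{n-1} 2(2\ell+1)\dim\Ker(N_\ell^{(\ell)}) + \sum_{\ell \ge n} 2(2\ell+1)\dim\Ker(N_\ell).$$
Conjugation by a diagonal matrix does not change kernel dimensions, so we may work with $N_\ell$ throughout. The plan is to show that the kernel is one-dimensional exactly when $\ell \in \{3,5,\ldots,2n-1\}$ and is zero otherwise.

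For $\ell \ge n$, formula (\ref{eq:detform}) shows that $\det N_\ell = 0$ if and only if $\ell$ is one of the odd integers $3,5,\ldots,2n-1$. At such $\ell$, Lemma \ref{lem:simpevals}(a) gives that $0$ is a simple eigenvalue, so $\dim\Ker N_\ell = 1$; for all other $\ell \ge n$ the kernel is trivial.

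For $2 \le \ell \le n-1$, we use the block lower-triangular structure from the proof of Lemma \ref{lem:simpevals}(b). Since $\det N_\ell = \det N_\ell^{(\ell)} \cdot \det(\text{lower block})$ and the lower $(n-\ell)\times(n-\ell)$ block has only negative eigenvalues, $\det N_\ell^{(\ell)} = 0$ if and only if $\det N_\ell = 0$, i.e. iff $\ell$ is odd with $3 \le \ell \le n-1$.

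The one step needing care is that for these $\ell$ the kernel of $N_\ell^{(\ell)}$ is one-dimensional. This holds by the irreducibility argument already used in Lemma \ref{lem:simpevals}(b): the off-diagonal entries $y_1,\dots,y_{\ell-2}$ and $z_1,\dots,z_{\ell-2}$ of this block are all nonzero and negative. More directly, any tridiagonal matrix with nonvanishing subdiagonal has every eigenspace of dimension $\le 1$, since an eigenvector is determined by its first coordinate through the recursion.

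Summing, we obtain
$$\mathrm{Nul}(\Sigma_n)=\sum_{j=1}^{n-1}2(2(2j+1)+1)=\sum_{j=1}^{n-1}2(4j+3)=2\bigl(2n(n-1)+3(n-1)\bigr)=(2n+3)(2n-2),$$
which is the claimed formula.

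Finally, the twistor deformations furnish a space of Jacobi fields isomorphic to $\bigoplus_{\ell=3}^{2n-1}\Hc_\ell^\C$. Here we read the index set as odd $\ell$ only, as $\mathfrak{so}(2n+1)/\mathfrak{so}(3)\cong \Lambda^2\Hc_n/\Hc_1=\bigoplus_{j=1}^{n-1}\Hc_{2j+1}$. Its real dimension is therefore $\sum_{j=1}^{n-1}2(4j+3)$, which equals the nullity just computed.

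It remains to check that this map into the space of Jacobi fields is injective. Its kernel is an $\SO(3)$-submodule. The only way a complexified deformation can yield a zero normal field is if it is tangential, i.e. comes from $\mathfrak{so}(3;\C)$, which has already been quotiented out. Hence the map is injective, and equality of dimensions shows that every Jacobi field is a twistor deformation.
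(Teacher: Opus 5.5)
Your proposal is correct and follows essentially the same route as the paper: the determinant formula (\ref{eq:detform}) places the zero eigenvalues at $\ell = 3,5,\ldots,2n-1$, simplicity gives exactly one kernel vector per $W_A$, the count $\sum_{j=1}^{n-1} 2(4j+3) = (2n+3)(2n-2)$ follows, and the twistor statement comes from matching this with $\dim_\R \mathfrak{so}(2n+1;\C) - \dim_\R \mathfrak{so}(3;\C)$. You are somewhat more careful than the paper in two places: you treat $2 \le \ell \le n-1$ explicitly through the block $N_\ell^{(\ell)}$, and you note that only odd $\ell$ occur in $\mathfrak{so}(2n+1)/\mathfrak{so}(3)$. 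Your injectivity remark for the twistor map is only a sketch, but the paper likewise takes that isomorphism for granted.
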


\begin{proof}
The determinant formula (\ref{eq:detform}) shows that $0$ is an eigenvalue of $N_\ell$ if and only if $\ell = 3, 5, \ldots, 2n-1,$ and Lemma \ref{lem:simpevals} implies that $0$ is a simple eigenvalue for these values of $\ell$.  By Proposition \ref{prop:isospectrum}, each simple eigenvalue of $N_\ell$ contributes $2(2\ell + 1) = 4\ell + 2$ eigenvectors of the Jacobi operator.  Therefore, parametrizing $\ell = 3, 5, \ldots, 2n-1$ as $\{2j+1 \colon 1 \leq j \leq n-1\}$, we find that 
    \begin{equation*}
        \begin{aligned}
             \operatorname{Nul}(\Sigma_n) &=  \sum_{j=1}^{n-1}  4(2j+1) + 2 = (2n+3)(2n-2).
        \end{aligned}
    \end{equation*}
Finally, since $\operatorname{Nul}(\Sigma_n) = (2n+3)(2n-2) = \dim \SO(2n+1, \C) - \dim \SO(3, \C)$, it follows that every Jacobi field arises from a twistor deformation of $\Sigma_n$.
\end{proof}

\section{The high codimension case}  \label{sec:HighCodim}

\indent \indent In this short section, we prove Corollary \ref{cor:HighCodim}. Let $\Sigma^2$ be an immersed minimal surface in $S^{2n+a}$ that lies in a totally geodesic $S^{2n} \subset S^{2n+a}$.  Let $N\Sigma$ and $\widetilde{N}\Sigma$ denote the normal bundles of $\Sigma$ viewed as a submanifold of $S^{2n+a}$ and of $S^{2n}$, respectively.  Note that since the normal bundle $NS^{2n} \to S^{2n}$ is trivial \cite{Simons68}, there is a bundle isomorphism $NS^{2n} \cong \underline{\R}^a$, thereby yielding a splitting
$$N\Sigma \cong \widetilde{N}\Sigma \oplus \underline{\R}^a.$$
In particular, every section $\eta \in \Gamma(N\Sigma)$ can be expressed as $\eta = \nu + F$, where $\nu \in \Gamma(\widetilde{N}\Sigma)$ and $F \in \Gamma(\underline{\R}^a) \cong C^\infty(\Sigma; \R^a)$. \\
\indent The fact that $S^{2n} \subset S^{2n+1}$ is totally geodesic has several consequences.  First, the second fundamental form $\mathrm{I\!I}$ of $\Sigma \subset S^{2n+a}$ is equal to the second fundamental form $\widetilde{\mathrm{I\!I}}$ of $\Sigma \subset S^{2n+a}$.  Second, the normal connection $\nabla^\perp$ on $N\Sigma$ decomposes as a sum $\nabla^\perp = \widetilde{\nabla} + D$, where $\widetilde{\nabla}$ is the normal connection on $\widetilde{N}\Sigma$, and $D$ is the trivial connection on $\underline{\R}^a$.  This implies that
$$\Delta^\perp = \widetilde{\Delta} + \Delta_\Sigma,$$
where here $\Delta^\perp$, $\widetilde{\Delta}$, and $\Delta_\Sigma$ are the respective connection Laplacians on $N\Sigma$, $\widetilde{N}\Sigma$, and $\underline{\R}^a$.   \\
\indent Third, we may view $\Sigma$ as a minimal surface of both $S^{2n+a}$ and of $S^{2n}$.  In particular, letting $\mathscr{B} \colon \Gamma(N\Sigma) \to \Gamma(N\Sigma)$ and $\widetilde{\mathscr{B}} \colon \Gamma(\widetilde{N}\Sigma) \to \Gamma(\widetilde{N}\Sigma)$ denote the operators
\begin{align*}
\mathscr{B}(\eta) & = \left\langle \mathrm{I\!I}_{ij}, \eta \right\rangle \mathrm{I\!I}_{ij} & \widetilde{\mathscr{B}}(\nu) & = \langle \widetilde{\mathrm{I\!I}}_{ij}, \eta \rangle\, \widetilde{\mathrm{I\!I}}_{ij},
\end{align*}
where we abbreviate $\mathrm{I\!I}_{ij} = \mathrm{I\!I}(e_i, e_j)$ for an orthonormal frame $(e_1, e_2)$ on $T\Sigma$, the Jacobi operators $\mathscr{J}$ and $\widetilde{\mathscr{J}}$ with respect to $S^{2n+a}$ and $S^{2n}$ are given by
\begin{align*}
\mathscr{J}(\nu + F) & = -(\Delta^\perp + \mathscr{B} + 2\,\mathrm{Id})(\nu + F) \\
\widetilde{\mathscr{J}}(\nu) & = -(\widetilde{\Delta} + \widetilde{\mathscr{B}} + 2\,\mathrm{Id})(\nu).
\end{align*}
So, since $\mathrm{I\!I}_{ij} = \widetilde{\mathrm{I\!I}}_{ij}  \in \Gamma(\widetilde{N}\Sigma)$, we see that $\mathscr{B}(\nu + F) = \left\langle \mathrm{I\!I}_{ij}, \nu + F \right\rangle \mathrm{I\!I}_{ij} = \left\langle \mathrm{I\!I}_{ij}, \nu \right\rangle \mathrm{I\!I}_{ij} = \widetilde{\mathscr{B}}(\nu)$, and consequently
\begin{align*}
\mathscr{J}(\nu + F) & = -\Delta^\perp (\nu + F) - \mathscr{B}(\nu + F) - 2(\nu + F) \\
& = -\widetilde{\Delta}\nu - \Delta_\Sigma F - \widetilde{\mathscr{B}}(\nu) - 2\nu - 2F \\
& = \widetilde{\mathscr{J}}(\nu) - (\Delta_\Sigma + 2\,\mathrm{Id})(F).
\end{align*}
In other words, we have
$$\mathscr{J}\!\begin{bmatrix} \nu \\ F \end{bmatrix} = \begin{bmatrix} \widetilde{\mathscr{J}} & 0 \\ 0 & -(\Delta_\Sigma + 2\,\mathrm{Id}) \end{bmatrix} \begin{bmatrix} \nu \\ F \end{bmatrix}\!.$$
We deduce the following:

\begin{prop} Let $\Sigma^2$ be an immersed minimal surface in $S^{2n+a}$ that lies in a totally geodesic $S^{2n} \subset S^{2n+a}$.  Then its nullity and index are given by
\begin{align*}
\mathrm{Nul}(\Sigma) & = \widetilde{\mathrm{Nul}}(\Sigma) + a\, \dim( \Ker(-\Delta_\Sigma-2)), \\
\mathrm{Ind}(\Sigma) & = \widetilde{\mathrm{Ind}}(\Sigma) + a \sum_{\lambda < 0} \dim( \Ker(-\Delta_\Sigma-2 - \lambda)),
\end{align*}
where here $\widetilde{\mathrm{Nul}}(\Sigma)$ and $\widetilde{\mathrm{Ind}}(\Sigma)$ denote its nullity and index as a minimal surface in $S^{2n}$.
\end{prop}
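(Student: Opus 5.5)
The proof rests on the block-diagonal decomposition of $\mathscr{J}$ displayed just before the statement. Using the splitting $N\Sigma \cong \widetilde{N}\Sigma \oplus \underline{\R}^a$, the space $L^2(\Sigma, N\Sigma)$ decomposes $L^2$-orthogonally as $L^2(\Sigma,\widetilde{N}\Sigma) \oplus L^2(\Sigma;\R^a)$. The splitting is orthogonal because the trivialization of $NS^{2n}$ by a parallel orthonormal frame (the constant normal directions of the totally geodesic $S^{2n}$) is orthogonal to $TS^{2n}$, hence to $\widetilde{N}\Sigma$. Since $\mathscr{J}$ is block diagonal with blocks $\widetilde{\mathscr{J}}$ and $-(\Delta_\Sigma + 2\,\mathrm{Id})$, both self-adjoint elliptic operators with discrete spectrum, each eigenspace of $\mathscr{J}$ is the direct sum of the corresponding eigenspaces of the two blocks.

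For this, take an eigenfield $\nu + F$ with eigenvalue $\lambda$. Because the operator is block diagonal, $\widetilde{\mathscr{J}}\nu = \lambda \nu$ and $-(\Delta_\Sigma+2)F = \lambda F$. Conversely, any pair of such eigenfields assembles into an eigenfield of $\mathscr{J}$. Therefore
\[
\dim \Ker(\mathscr{J} - \lambda) = \dim \Ker(\widetilde{\mathscr{J}} - \lambda) + \dim \Ker\bigl(-(\Delta_\Sigma + 2) - \lambda\bigr)\big|_{C^\infty(\Sigma;\R^a)}.
\]

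The remaining point is that on $C^\infty(\Sigma;\R^a)$ the trivial connection $D$ acts componentwise, so $\Delta_\Sigma$ is the scalar Laplacian applied to each of the $a$ components. Its eigenspaces on $\R^a$-valued functions are therefore $a$ copies of the scalar eigenspaces, which gives the factor $a$.

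Setting $\lambda = 0$ yields the nullity formula. Summing over $\lambda < 0$ yields the index formula, interpreting $\mathrm{Ind}$ as the number of negative eigenvalues counted with multiplicity. The sum is finite because $-\Delta_\Sigma - 2 \geq -2$ and the scalar spectrum is discrete.

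There is no serious obstacle. The only points needing care are that the orthogonal splitting is compatible with the $L^2$ inner product, so that eigenspaces genuinely split, and that the index of a block-diagonal operator is additive. Both follow directly from the self-adjointness of each block and from Rayleigh-quotient (min-max) reasoning applied to the orthogonal sum.
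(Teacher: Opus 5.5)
Your proposal is correct and follows the same route as the paper. The paper also deduces the proposition directly from the block-diagonal form $\mathscr{J} = \widetilde{\mathscr{J}} \oplus \bigl(-(\Delta_\Sigma + 2\,\mathrm{Id})\bigr)$ with respect to $N\Sigma \cong \widetilde{N}\Sigma \oplus \underline{\R}^a$: eigenspaces split, and the factor $a$ comes from the componentwise scalar Laplacian on $C^\infty(\Sigma;\R^a)$. Your remarks on the parallel trivialization by constant normal vectors and on the finiteness of the index sum make explicit what the paper leaves implicit.
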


\indent We now apply this proposition to the case where $\Sigma$ is isometric to a $2$-sphere of constant curvature $K = \frac{2}{n(n+1)}$.  It is a standard fact that the eigenvalues of $\Delta_\Sigma$ are $-\ell(\ell+1)K = -2\frac{\ell(\ell+1)}{n(n+1)}$ with multiplicity $2\ell + 1$, for $\ell \in \Z_{\geq 0}$. \\
\indent For the nullity, we observe that $\Ker(-\Delta_{\Sigma} - 2) = \{f \in C^\infty(S^2) \colon \Delta_{\Sigma} f = -2f\}$.  The $\Delta_\Sigma$-eigenvalue $-2$ arises by taking $\ell = n$, so that $\dim(\Ker(-\Delta_{\Sigma} - 2)) = 2n+1$, and hence
\begin{equation} \label{eq:NulHigh}
\mathrm{Nul}(\Sigma) = \widetilde{\mathrm{Nul}}(\Sigma) + a(2n+1).
\end{equation}
For the index, we observe that $\Ker(-\Delta_{\Sigma} - 2 - \lambda) = \{f \in C^\infty(S^2) \colon \Delta_{\Sigma} f = (-\lambda-2)f\}$.
For $\lambda < 0$, we have $-\lambda - 2 > -2$, so we consider the $\Delta_\Sigma$-eigenvalues greater than $-2$.  Such eigenvalues correspond to $\ell = 0, 1, \ldots, n-1$.  Therefore, $\dim(\Ker(-\Delta_{\Sigma} - 2 - \lambda)) = \sum_{\ell = 0}^{n-1} (2\ell + 1) = n^2$, and hence
\begin{equation} \label{eq:IndHigh}
\mathrm{Ind}(\Sigma) = \widetilde{\mathrm{Ind}}(\Sigma) + an^2.
\end{equation}
Equations (\ref{eq:NulHigh}) and (\ref{eq:IndHigh}), together with Theorems \ref{thm:IndexResult} and \ref{thm:NullityResult}, along with Calabi's classification \cite{Calabi67} of minimal $2$-spheres in $S^{2n+a}$ with constant Gauss curvature, establish Corollary \ref{cor:HighCodim}.

\section{Stability indices of the associative cone over the Bor{\r{u}}vka sphere}  \label{sec:StabIndex}

\subsection{Background}

\indent \indent In this section, we apply our setup to the deformation theory of associative submanifolds in $\mathrm{G}_2$ geometry.  To set the stage, let $e_1, \ldots, e_7$ be the standard basis of $\R^7$, let $\langle \cdot, \cdot \rangle$ denote the euclidean inner product on $\R^7$, and define a $3$-form $\varphi \in \Omega^3(\R^7)$ via
\begin{equation} \label{eq:AssocForm}
    \varphi = e^{123}+e^{145}-e^{167}+ e^{247}-e^{256}-e^{346}-e^{357},
\end{equation}
where we abbreviate $e^{ijk} = e^i \wedge e^j \wedge e^k$.  The vector cross product $\times \colon \R^7 \times \R^7 \to \R^7$ may then be defined by the equation $\langle u \times v, w \rangle = \varphi(u,v,w)$. \\
\indent It is well-known that $\varphi$ is a calibration, and that the $\GL_7(\R)$-stabilizer of $\varphi$ is isomorphic to the exceptional Lie group $\G_2$.   An oriented $3$-dimensional submanifold $N \subset \R^7$ is called an \emph{associative $3$-fold} if it is $\varphi$-calibrated, meaning that $\varphi|_N = \vol_N$.   \\
\indent Let $J$ denote the standard (non-integrable) almost-complex structure on $S^6$.  Explicitly, at a point $p \in S^6 \subset \R^7$, we define $J_p \colon T_pS^6 \to T_pS^6$ to be the map $J_p(v) = p \times v$.  An oriented surface $\Sigma \subset S^6$ is called a \emph{$J$-holomorphic curve} if $J(T\Sigma) = T\Sigma$.  Arguably the simplest linearly full $J$-holomorphic curve is the Bor{\r{u}}vka sphere $\Sigma_3 \subset S^6$. \\
\indent As mentioned in $\S$\ref{sec:Intro}, a surface $\Sigma \subset S^6$ is a $J$-holomorphic curve if and only if its cone $C = \mathrm{C}(\Sigma) = \{r \sigma \in \R^7 \colon r > 0, \sigma \in S^6\}$ is associative.  Also as discussed in $\S$\ref{sec:Intro}, the deformation theory of an associative $3$-fold with a singularity modeled on $C$ is governed by the ``stability index" of $C$.  We now define this latter term precisely.

\begin{defn}[\cite{Bera23}] Let $\Sigma \subset S^6$ be a closed connected $J$-holomorphic curve, and let $C \subset \R^7$ denote its cone.  Let $\nabla$ denote the Levi-Civita connection on $S^6$.
\begin{itemize}
\item The \emph{characteristic $\SU(3)$-connection} is given by
$$\widehat{\nabla} X = \nabla X + \frac{1}{2}(\nabla X)JX.$$
\item The \emph{Dirac operator} of $\Sigma$ is the operator $D_\Sigma \colon \Gamma(N\Sigma) \to \Gamma(N\Sigma)$ given by
\begin{equation}
D_\Sigma = f_1 \times \widehat{\nabla}^\perp_{f_1} + f_2 \times \widehat{\nabla}^\perp_{f_2}, \label{eq:Dirac}
\end{equation}
where $\widehat{\nabla}^\perp$ is the normal part of $\widehat{\nabla}$, and $\{f_1, f_2\}$ is a local oriented orthonormal frame on $\Sigma$. For $\lambda \in \R$, we let $d_\lambda$ denote the dimension of $(\lambda+1)$-eigenspace of $JD_\Sigma$:
$$d_\lambda := \dim\!\left\{ \nu \in \Gamma(N\Sigma) \colon (JD_\Sigma)\nu = (\lambda + 1)\nu \right\}\!.$$
\item The \emph{stability index of $C$} is
$$\text{s-ind}(C) := \frac{1}{2}d_{-1} + d_1 + \sum_{\lambda \in (-1,1)} d_\lambda - 7 - \dim(Z),$$
where $Z$ is the stratum of the moduli space of closed connected $J$-holomorphic curves that contains $\Sigma$.  The \emph{upper and lower stability indices of $C$} are, respectively,
\begin{align*}
\text{s-ind}_+(C) & := \frac{1}{2}d_{-1} + d_1 + \sum_{\lambda \in (-1,1)} d_\lambda - 7 - \dim(\G_2/\mathrm{H}),  \\
\text{s-ind}_-(C) & := \frac{1}{2}d_{-1} + \sum_{\lambda \in (-1,1)} d_\lambda - 7,  \
\end{align*}
where $\mathrm{H} \leq \G_2$ is the maximal subgroup of $\G_2$ that fixes $\Sigma$.  The inequalities $d_1 \geq \dim(Z) \geq \dim(\G_2/\mathrm{H})$ imply that $\text{s-ind}_+(C) \geq \text{s-ind}(C) \geq \text{s-ind}_-(C)$.
\item We say that $C$ is \emph{rigid} if $\text{s-ind}_+(C) = \text{s-ind}_-(C)$.  This is equivalent to $d_1 = \dim(\G_2/\mathrm{H})$, which means that all the infinitesimal deformations of $\Sigma$ are induced by the $\G_2$-action on $S^6$.
\end{itemize}
\end{defn}

\indent Bera shows \cite[Theorem 1.10]{Bera23} that if $\Sigma \subset S^6$ has genus $0$ (respectively, genus $1$) and is not the totally geodesic $2$-sphere, then $\text{s-ind}_-(C) \geq 5$ (resp., $\text{s-ind}_-(C) \geq 1$).  Moreover, the Harvey-Lawson $T^2$-cone in $\R^6 \times 0 \subset \R^7$, and the union of two special Lagrangian $3$-planes in $\R^6 \times 0 \subset \R^7$ with transverse intersection at the origin both have $\text{s-ind}(C) = 1$, and these are the only special Lagrangian cones with stability index $1$.  In the next subsection, we prove Theorem \ref{thm:StabilityResult}, calculating the upper and lower stability indices for the cone over the Bor{\r{u}}vka sphere $\Sigma_3 \subset S^6$. 

\subsection{Calculating the stability indices}

\indent \indent We begin by setting up a suitable moving frame for $S^6$.  For this, we recall that the Lie group $\G_2$ acts transitively on $S^6$ with stabilizer isomorphic to $\SU(3)$, and hence $S^6 = \G_2/\SU(3)$.  As such, we may regard $\pi \colon \G_2 \to S^6$ as the bundle of $\SU(3)$-frames over $S^6$.  Write the Maurer-Cartan form $\zeta \in \Omega^1(\G_2; \mathfrak{g}_2)$ as
\begin{equation*}
\zeta =    \left[ \begin {array}{ccccccc} 0&-\alpha_{{1}}&-\alpha_{{2}}&-\beta_{
{1}}&-\beta_{{2}}&-\beta_{{3}}&-\beta_{{4}}\\ \noalign{\medskip}\alpha
_{{1}}&0&-\psi_{{1}}&-\theta_{{3}}-\tfrac{1}{2}\,\beta_{{3}}&-\theta_{{4}}-\tfrac{1}{2}
\,\beta_{{4}}&\theta_{{1}}+\tfrac{1}{2}\,\beta_{{1}}&-\theta_{{2}}+\tfrac{1}{2}\,\beta_{
{2}}\\ \noalign{\medskip}\alpha_{{2}}&\psi_{{1}}&0&\theta_{{4}}-\tfrac{1}{2}\,
\beta_{{4}}&-\theta_{{3}}+\tfrac{1}{2}\,\beta_{{3}}&-\theta_{{2}}-\tfrac{1}{2}\,\beta_{{
2}}&-\theta_{{1}}+\tfrac{1}{2}\,\beta_{{1}}\\ \noalign{\medskip}\beta_{{1}}&
\theta_{{3}}+\tfrac{1}{2}\,\beta_{{3}}&-\theta_{{4}}+\tfrac{1}{2}\,\beta_{{4}}&0&\tfrac{1}{2}\,
\psi_{{1}}+\xi_{{1}}&-\tfrac{1}{2}\,\alpha_{{1}}-\xi_{{2}}&-\tfrac{1}{2}\,\alpha_{{2}}+
\xi_{{3}}\\ \noalign{\medskip}\beta_{{2}}&\theta_{{4}}+\tfrac{1}{2}\,\beta_{{4}
}&\theta_{{3}}-\tfrac{1}{2}\,\beta_{{3}}&-\tfrac{1}{2}\,\psi_{{1}}-\xi_{{1}}&0&\tfrac{1}{2}\,
\alpha_{{2}}+\xi_{{3}}&-\tfrac{1}{2}\,\alpha_{{1}}+\xi_{{2}}
\\ \noalign{\medskip}\beta_{{3}}&-\theta_{{1}}-\tfrac{1}{2}\,\beta_{{1}}&\theta
_{{2}}+\tfrac{1}{2}\,\beta_{{2}}&\tfrac{1}{2}\,\alpha_{{1}}+\xi_{{2}}&-\tfrac{1}{2}\,\alpha_{{2}}
-\xi_{{3}}&0&-\tfrac{1}{2}\,\psi_{{1}}+\xi_{{1}}\\ \noalign{\medskip}\beta_{{4}
}&\theta_{{2}}-\tfrac{1}{2}\,\beta_{{2}}&\theta_{{1}}-\tfrac{1}{2}\,\beta_{{1}}&\tfrac{1}{2}\,
\alpha_{{2}}-\xi_{{3}}&\tfrac{1}{2}\,\alpha_{{1}}-\xi_{{2}}&\tfrac{1}{2}\,\psi_{{1}}-\xi
_{{1}}&0\end {array} \right].
\end{equation*}
Then the $1$-forms $\alpha, \beta$ are semi-basic for the projection $\pi \colon \G_2 \to S^6$, while $\theta, \psi, \xi$ are connection 1-forms. Explicitly, the characteristic $\SU(3)$-connection on $S^6$, regarded as an element of $\Omega^1(\G_2; \mathfrak{su}(3))$, is
\begin{equation} \label{eq:CharConn}
    \left[ \begin {array}{cccccc} 0&-\psi_{{1}}&-\theta_{{3}}&-\theta_{{4
}}&\theta_{{1}}&-\theta_{{2}}\\ \noalign{\medskip}\psi_{{1}}&0&\theta_
{{4}}&-\theta_{{3}}&-\theta_{{2}}&-\theta_{{1}}\\ \noalign{\medskip}
\theta_{{3}}&-\theta_{{4}}&0&\tfrac{1}{2}\,\psi_{{1}}+\xi_{{1}}&-\xi_{{2}}&\xi_
{{3}}\\ \noalign{\medskip}\theta_{{4}}&\theta_{{3}}&-\tfrac{1}{2}\,\psi_{{1}}-
\xi_{{1}}&0&\xi_{{3}}&\xi_{{2}}\\ \noalign{\medskip}-\theta_{{1}}&
\theta_{{2}}&\xi_{{2}}&-\xi_{{3}}&0&-\tfrac{1}{2}\,\psi_{{1}}+\xi_{{1}}
\\ \noalign{\medskip}\theta_{{2}}&\theta_{{1}}&-\xi_{{3}}&-\xi_{{2}} & \frac{1}{2}\,\psi_{{1}}-\xi_{{1}} & 0\end {array} \right].
\end{equation}
To adapt frames to the Bor{\r{u}}vka sphere $\Sigma_3 \to S^6$, we shall regard $\SO(3) \to \Sigma_3$ as an adapted $\SO(2)$-frame bundle.  Pulling back $\zeta \in \Omega^1(\G_2; \mathfrak{g}_2)$ to $\SO(3) \leq \G_2$ and comparing with the Maurer-Cartan form $\mu \in \Omega^1(\SO(3); \mathfrak{so}(3))$ as in (\ref{eq:so3irrmc}), we have the following equations on $\SO(3)$:
\begin{align*}
\beta & = 0 & \omega_1 & = \alpha_1 & \psi_1 & = -\rho & \theta_1 & = 0 & \xi_1 & = \tfrac{5}{2}\rho \\
& & \omega_2 & = \alpha_2 &  & & \theta_2 & = 0 & \xi_2 & = 0 \\
  & &  &  &  & & \theta_3 & = \tfrac{\sqrt{15}}{6} \omega_1 & \xi_3 & = 0 \\
  &  &  & &  & & \theta_4 & = \tfrac{\sqrt{15}}{6} \omega_2
    \end{align*}
In particular, the normal part of the characteristic $\SU(3)$-connection on $N\Sigma_3$ equals the direct sum connection on $N\Sigma_3 \cong \underline{\C}_2 \oplus \underline{\C}_3$ induced by the Levi-Civita connection $\rho$.  \\
\indent Now, from $\langle J_{e_1}(v), w \rangle = \varphi(e_1, v, w)$ and (\ref{eq:AssocForm}), we calculate that the almost complex structure $J$ on $N\Sigma_3 = \uC_2 \oplus \uC_3$ is
\begin{equation*}
    J = \begin{bmatrix}
        i & 0 \\
        0 & -i
    \end{bmatrix}\!.
\end{equation*}
Using (\ref{eq:Dirac}) and (\ref{eq:CharConn}), we also calculate that the Dirac operator $D_{\Sigma_3} \colon \Gamma(N\Sigma_3) \to \Gamma(N\Sigma_3)$ is
\begin{equation*}
    D_{\Sigma_3} \!\begin{bmatrix}
        A_2 \\
        A_3
    \end{bmatrix} = \begin{bmatrix}
        0 & -2i \overline{\partial} \\
        -2i \partial & 0
    \end{bmatrix} \begin{bmatrix}
        A_2 \\ A_3
    \end{bmatrix}\!.
\end{equation*}
To obtain the spectrum of $JD_{\Sigma_3}$, we shall exploit the $\SO(3)$-invariant decomposition (\ref{eq:Isotypical}) of $L^2(\Sigma_3, N\Sigma_3)$ into isotypical summands, namely
\begin{align*}
L^2(\Sigma_3, N\Sigma_3) \cong \bigoplus_{\ell = 2}^\infty I_\ell = \Hc_2^\C \oplus \bigoplus_{\ell = 3}^\infty (\Hc_\ell^\C)^{\oplus 2}.
\end{align*}
In particular, we see that
$$I_2 = \Hc_2^\C \ \ \ \text{ and } \ \ \ I_\ell = \Hc_\ell^\C \oplus \Hc_\ell^\C \text{ for }\ell > 2.$$
We now prove:

\begin{thm} We have $\mathrm{s}\text{-}\mathrm{ind}_+(C(\Sigma_3)) = 41$ and $\mathrm{s}\text{-}\mathrm{ind}_-(C(\Sigma_3)) = 30$.  In particular, the cone over the Bor{\r{u}}vka sphere is not rigid.
\end{thm}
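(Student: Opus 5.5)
The plan is to compute the full spectrum of $JD_{\Sigma_3}$ one isotypical component at a time, as was done for the Jacobi operator in Proposition~\ref{prop:isospectrum}, and then to add up the relevant multiplicities. Composing $J=\mathrm{diag}(i,-i)$ with the formula for $D_{\Sigma_3}$ above gives
$$JD_{\Sigma_3}\begin{bmatrix} A_2\\ A_3\end{bmatrix} = \begin{bmatrix} 2\overline{\partial}A_3 \\ -2\partial A_2\end{bmatrix}.$$
This operator is $\SO(3)$-equivariant, so it preserves each $I_\ell$. For $\ell \geq 3$, take $A$ in a $\Delta$-eigenbasis of the $\Hc_\ell^\C$-component of $L^2(\uC_2)$. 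Then $I_\ell$ splits as a direct sum of the two-dimensional spaces $W_A=\mathrm{span}(A,\partial A)$, exactly as in (\ref{eq:SpecialSubspace}), and each $W_A$ is preserved.

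Next I would compute the $2\times 2$ matrix of $JD_{\Sigma_3}$ on $W_A$, taking $K=\tfrac16$ since $n=3$. By Proposition~\ref{cor:evectshifting}(c) we have $\Delta A = (4-\ell(\ell+1))K A$, and then part (a) gives
$$\overline{\partial}\partial A = \tfrac14(\Delta+2K)A = -\frac{(\ell-2)(\ell+3)}{24}A .$$
Hence $JD_{\Sigma_3}$ sends $A \mapsto -2\partial A$ and $\partial A \mapsto -\frac{(\ell-2)(\ell+3)}{12}A$. Its eigenvalues are therefore
$$\mu = \pm\sqrt{(\ell-2)(\ell+3)/6},$$
each occurring with multiplicity $2(2\ell+1)$ in $I_\ell$. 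The component $I_2$ must be handled separately. There $\partial A$ lies in $\uC_3$, which has no $\Hc_2^\C$-component, so $\partial A=0$ and the eigenvalue is $\mu=0$, with multiplicity $10$.

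It then remains to count. With $\mu=\lambda+1$, we need those $\mu$ with $\lambda\in[-1,1]$, i.e. $\mu\in[0,2]$. The value $\mu=0$ comes only from $\ell=2$, so $d_{-1}=10$. The condition $\mu\in(0,2)$ means $0<(\ell-2)(\ell+3)<24$. This holds for $\ell=3$, where $\mu=1$ with multiplicity $14$, and for $\ell=4$, where $\mu=\sqrt{7/3}$ with multiplicity $18$, for a total of $32$. The value $\mu=2$ occurs exactly at $\ell=5$, so $d_1=22$. This gives
$$\text{s-ind}_-(C) = 5+32-7 = 30.$$

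For the upper index we still need $\dim(\G_2/\mathrm{H})$. I would argue that $\mathrm{H}$ has identity component $\SO(3)$, as follows. The Lie algebra $\mathfrak{h}$ is contained in the normalizer of $\mathfrak{so}(3)$ in $\mathfrak{g}_2$. Since $\R^7=\Hc_3$ is $\SO(3)$-irreducible, the centralizer of $\mathfrak{so}(3)$ in $\mathfrak{gl}_7$ is trivial, so this normalizer is $\mathfrak{so}(3)$ itself. Hence $\dim(\G_2/\mathrm{H})=14-3=11$, and
$$\text{s-ind}_+(C) = 30+22-11 = 41.$$
Non-rigidity then follows because $d_1=22\neq 11$. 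Equivalently, $22$ matches $\dim_\R(\mathfrak{so}(7,\C)/\mathfrak{so}(3,\C))$ restricted to $\Hc_5^\C$, which is more than the $\G_2$-motions account for.

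I expect the main obstacle to be the first step: confirming the formula for $JD_{\Sigma_3}$, including its signs and constants and how the normalizations of $\partial$ enter. The counting itself only uses $\mu^2$, so a sign error would be harmless, but a factor error would shift which $\ell$ gives $\mu=2$. As a consistency check, $d_1$ should contain the infinitesimal $\G_2$-motions, namely $\mathfrak{g}_2/\mathfrak{so}(3)\cong\Hc_5$. The fact that $\mu=2$ lands exactly at $\ell=5$ supports the constants.
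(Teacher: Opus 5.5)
Your proposal is correct and is essentially the paper's proof: the same reduction to $W_A=\mathrm{span}(A,\partial A)$ (with $\partial A=0$ for $\ell=2$), the same $2\times 2$ matrix (you compose with $J$ before restricting rather than after), the eigenvalues $\pm\sqrt{(\ell-2)(\ell+3)/6}$ with multiplicity $2(2\ell+1)$, and the same count $d_{-1}=10$, $14+18=32$ from $\ell=3,4$, $d_1=22$, and $\dim(\G_2/\mathrm{H})=11$. The only place you go beyond the paper, which simply asserts $\mathrm{H}=\SO(3)$, is your argument for $\dim\mathrm{H}=3$, and there the claim that $\mathfrak{h}$ normalizes $\mathfrak{so}(3)$ is given no justification (also, the centralizer in $\mathfrak{gl}_7$ is $\R\,\mathrm{Id}$, trivial only in $\mathfrak{g}_2$). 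It is simpler to note that $\mathrm{H}$ acts by isometries on $\Sigma_3\cong S^2$, and effectively since $\Sigma_3$ spans $\R^7$, so $\dim\mathrm{H}\le\dim\mathrm{O}(3)=3$.
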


\begin{proof} We shall study the Dirac operator $D_{\Sigma_3}$ on the subspaces $W_A < I_\ell$ defined in (\ref{eq:SpecialSubspace}).  For $\ell = 2$, the operator $D_{\Sigma_3}$ restricted to $W_A = \mathrm{span}(A)$ is identically zero.  For $\ell > 2$, the operator $D_{\Sigma_3}$ restricted to $W_A = \operatorname{span}(A, \partial A)$ has the following matrix representation with respect to the basis $\{A, \partial A\}$:
\begin{equation*}
    \begin{bmatrix}
        0 & -\tfrac{i}{12}(6 - \ell(\ell+1)) \\
        -2i & 0
    \end{bmatrix}\!.
\end{equation*}
Therefore, the spectrum of $J D_{\Sigma_3}$ consists of $0$ with multiplicity $\dim(\Hc_2^\C) = 10$, and the numbers $ \lambda^{\pm}_{\ell} = \pm\sqrt{\frac{(\ell+3)(\ell-2)}{6}}$ for $\ell > 2$, each of multiplicity each of multiplicity $\dim(\Hc_\ell^\C) = 2(2\ell+1)$.  In particular, the eigenvalues of $JD_{\Sigma_3}$ in the interval $[0,2]$ consist of $0$, $\lambda_3^+ = 1$, $\lambda_4^+ = \sqrt{\frac{7}{3}}$, and $\lambda_5^+ = 2$ with respective multiplicities $10$, $14$, $18$, and $22$.  Recalling that $d_\lambda$ is the dimension of the $(\lambda+1)$-eigenspace of $JD_{\Sigma_3}$, we find that
\begin{align*}
d_{-1} & = 10, & d_0 & = 14, & d_{\sqrt{\frac{7}{3}} - 1} & = 18, & d_1 & = 22.
\end{align*}
Finally, noting that $\mathrm{H} = \SO(3)$, so that $\dim(\G_2/\mathrm{H}) = 11$, the result follows.

\end{proof}

\bibliography{IndexBoruvkaRefs}

\Addresses

\end{document}